\theoremstyle{plain}
\newtheorem{theorem}{Theorem}[section]
\newtheorem{lemma}[theorem]{Lemma}
\newtheorem{proposition}[theorem]{Proposition}
\theoremstyle{definition}
\newtheorem{defn}{Definition}[section]
\theoremstyle{remark}
\newtheorem{rem}{Remark}[section]
\numberwithin{equation}{section}
\newcommand{\mbfu}{\mathbf{u}}
\newcommand{\mw}{\mathbf{w}}
\newcommand{\real}{\mathbb{R}}
\newcommand{\ds}{\displaystyle}
\definecolor{Green}{rgb}{0.010,0.7,0.02}
\newcommand{\NN}{\mathbb{N}}
\newcommand{\RR}{\mathbb{R}}
\newcommand{\pa}{\partial}
\newcommand{\loc}{\text{loc}}
\newcommand{\dive}{\mbox{div}}
\newcommand{\mbfv}{\mathbf{v}}
\newcommand{\curl}{\text{curl}}
\newcommand{\mPhi}{\boldsymbol{\Phi}}
\newcommand{\mxi}{\boldsymbol{\xi}}
\newcommand{\bomega}{\boldsymbol{\omega}}
\newcommand{\cL}{\mathcal{L}}
\begin{document}

\title[Planar limits of helically symmetric flows]
{Planar limits of three-dimensional incompressible  flows with helical symmetry}

\author[M.C. Lopes Filho {\em et al.}]{Milton C. Lopes Filho}
\address[M.C. Lopes Filho]
{Mathematics Institute, \\
Federal University of Rio de Janeiro,\\
P.O. Box 68530, 21941-909 
Rio de Janeiro, RJ,  Brazil}
\email{mlopes@im.ufrj.br}

\author[]{Anna L. Mazzucato}
\address[A.L. Mazzucato]{Department of Mathematics\\
Penn State University \\
University Park, PA, 16801, U.S.A.}
\email{alm24@psu.edu}

\author[]{Dongjuan Niu}
\address[D. Niu]
{School of Mathematical Sciences  \\
Capital Normal University \\
Beijing 100048, P. R. China} \email{niuniudj@gmail.com}

\author[]{Helena J. Nussenzveig Lopes}
\address[H.J. Nussenzveig Lopes]
{Mathematics Institute, \\
Federal University of Rio de Janeiro,\\
P.O. Box 68530, 21941-909 
Rio de Janeiro, RJ,  Brazil}
\email{hlopes@im.ufrj.br}

\author[]{Edriss S. Titi}
\address[E.S. Titi]
{Department of Computer Science and Applied Mathematics \\
Weizmann Institute of Science  \\
Rehovot 76100, Israel, and \\
Department of Mathematics \\
and  Department of Mechanical and  Aerospace Engineering \\
University of California \\
Irvine, CA  92697-3875, USA. 
 } \email{etiti@math.uci.edu\\edriss.titi@weizmann.ac.il}

\begin{abstract}
Helical symmetry is invariance under a one-dimensional group of rigid
motions generated by a simultaneous rotation around a fixed axis and
translation along the same axis. The key parameter in helical symmetry
is the {\it step} or pitch, the magnitude of the translation after rotating one full
turn around the symmetry axis. In this article we study the limits of three-dimensional helical
viscous and inviscid incompressible flows in an infinite circular pipe,
with respectively no-slip  and no-penetration boundary conditions, as
the step approaches infinity. We show that, as the step becomes large,
the three-dimensional  helical flow approaches a planar flow, which is governed by the so-called two-and-half Navier-Stokes and Euler equations, respectively.  
\end{abstract}

\maketitle

{\bf MSC Subject Classifications:} 35Q35, 65M70.


{\bf Keywords:} Helical symmetry, Navier--Stokes
equations in thin domains.

\vspace{0.5cm}

\today

\vspace{0.5cm}

\section{Introduction}

The helical groups are a family of one-dimensional subgroups of the
rigid motions of three-dimensional Euclidean space consisting of
simultaneous rotation around an axis and translation along the same
axis, for which the ratio of angular rotation to translation is kept
fixed. Each helical group is characterized by a parameter $\sigma \in
\real \setminus \{0\}$, which we call the {\it step} or pitch, defined as the
translation displacement along the symmetry axis after one full {\it
  clockwise} turn around the axis.  The incompressible Navier-Stokes
and Euler equations are covariant under the action of the helical
group. Helically-symmetric or, simply,  ``helical'' flows
 represent a physically interesting class of fluid motions, which
 interpolate between two-dimensional flows and axisymmetric flows,
 see  for instance \cite{CLS99}. Indeed, the helical  groups lie between
 rigid translations in one direction,  associated with 2D flows,  and
 rotation around a fixed axis, associated with axisymmetric
 flows. These regimes 
correspond to formally taking the limits $\sigma \to \infty$ and
$\sigma \to 0$, respectively. The main goal of this work is to examine
the precise nature of the limit $\sigma \to \infty$ for helical
flows, in the case of viscous  and inviscid incompressible flows in a
 circular pipe satisfying,  respectively, no-slip and no-penetration boundary
 conditions.
The limit $\sigma \to 0$ is more technical and, in some sense, less
interesting, as we expect that helical flows will converge  in the limit to
axisymmetric, planar flows, a trivial special case  of axisymmetric flows.  
In fact, periodicity in this case implies asymptotically 
high-frequency oscillations, with weak averaging in the vertical
direction. The analysis of the limit $\sigma\to 0$ is 
closely related  to that in  some of the thin  domain
literature, particularly the special case referred to as {\it PD}, or 
periodic Dirichlet (see \cite{IR01} for more details.) We reserve to
study the limit $\sigma \to 0$ in future work.

We begin by recalling the known mathematical results concerning
helical flows.  As it is the case of two-dimensional flows and axisymmetric flows in
cylindrical domains bounded away from the axis of symmetry,  viscous
incompressible helical flows are globally well posed. This result was proved
by A. Mahalov, E. Titi and S. Leibovich in \cite{MTL}. 
In fact, for the case of a circular pipe they established both global existence of a weak helical solution
with initial data in $L^2$, and global existence and uniqueness of a
strong solution with initial data in 
the Sobolev space $H^1$. (For a discussion about
uniqueness of weak solutions, , within the class of all Leray-Hopf weak solutions of the three-dimensional Navier-Stokes with helical initial data, see
\cite{BLNLT12}.) The situation is different, and rather 
interesting, in the case of ideal fluid governed by the Euler equations, see \cite{D,ET}.
As a matter of fact, an additional  geometric condition is imposed on inviscid
flows, akin to assuming no swirl in the axisymmetric setting, which we
call  {\em no helical swirl} or {\em no helical stretching}.
Under this condition, B. Ettinger and E. Titi \cite{ET} showed global
existence and uniqueness of weak solutions in an appropriate
vorticity-stream function formulation. This formulation can be used,
because, even for finite $\sigma$,  the flow is essentially
two-dimensional, in the sense that it  is
completely determined by the dynamics of the first two components of
the velocity field restricted to any cross section of the pipe.

The main result of this work is a convergence result of  helical flows
to certain flows, the dynamics of which is two dimensional. For this
reason, we will call such limits planar flows, even though the
velocity field can still have three non-zero components. More
precisely, we show that, in the limit $\sigma\to \infty$, helical
flows converge, respectively,  to so-called $2$ and
$1/2$ dimensional flows in the viscous case, and to 2D Euler flows in
the inviscid case. These results are established by first
obtaining a set of symmetry-reduced equations equivalent to the
original  fluid equations, at least for regular flows. 
The unknowns in
these equations are fields on a cross section of the pipe and, hence,
depend on two spatial variables only.
Convergence is then investigated via   energy methods and compactness
arguments. For the Navier-Stokes equations, energy estimates are
sufficient to pass to the limit and give us a rate of convergence of
order $1/\sqrt{\sigma}$ in the energy norm.

One special difficulty in the viscous case  is the way in which the
divergence-free condition and the symmetry reduction interact when we
vary $\sigma$. To be more precise, the symmetry reduction amounts to
the fact that a helical vector field is entirely 
determined by its trace on a horizontal slice, say $D = \{x_1^2 +
x_2^2 < 1, x_3 = 0\}$, the trace being a three-component vector field
in the plane. For a given $\sigma>0$ all three-component
fields in $D$ may be extended in a unique way to  helical vector
fields in $D \times (0,\sigma)$. However, the resulting extension will
not be divergence-free unless the original field in the slice
satisfies a certain $\sigma$-dependent condition. In other words, 
after symmetry reduction, 
problems with different $\sigma$ reside in different function spaces,
even if their physical domain $D$ is the same.
This difficulty is bypassed in the inviscid case with the use of 
a stream function, under the ``no helical swirl'' condition.

The remainder of this article is divided into four sections. In
Section \ref{prelim}, we fix 
notation and derive an equivalent formulation of helical symmetry for
functions and vector fields. In Section \ref{NS}, we perform the
symmetry reduction on the Navier-Stokes equations. In Section \ref{limit}
we study the limit $\sigma \to \infty$ for the viscous case, while in
Section \ref{Euler} we discuss the case of the Euler equations.

\section{Preliminaries and symmetry reduction} \label{prelim}

We begin by recalling some standard notation for function spaces
that will appear throughout the paper. If $\Omega$ is a domain in
$\RR^d$, we denote by $H^k(\Omega)$, $k\in \NN$, the standard
$L^2$-based Sobolev spaces:
\[
     H^k(\Omega)=\{f:\Omega \to \RR\ ; \ f, \pa^\alpha f \in
     L^2(\Omega), |\alpha|\leq k\},
\]
where we employed the usual multiindex notation for derivatives, which
are interpreted in the weak sense, while $W^{k,p}(\Omega)$ denotes $L^p$-based
Sobolev spaces.
By abuse of notation, if $\mbfu:\Omega\to\RR^d$ is a vector field, we will
often write $\mbfu\in H^k(\Omega)$ for $u\in (H^k(\Omega))^d$, and we will
drop the explicit dependence on the domain $\Omega$ when no confusion
can arise.
$H^1_0(\Omega)$ will denote the subspace of $H^1(\Omega)$ of functions
with zero trace at the boundary $\pa\Omega$.
If $\Omega$ is an unbounded domain, $L^p_{\loc}(\Omega)$ is the space
of functions with $p$-th integrable power on each bounded open subset of $\Omega$.
Lastly, we denote H\"older spaces by $C^\alpha(\Omega)$, $\alpha \in \RR_+$. 
Later in the paper, we will introduce other spaces adapted to the
symmetry and 
geometry of the problem.
Throughout, $(,)$ will denote the standard $L^2$ inner product.

One tool that will be used repeatedly in the analysis is 
the following interpolation inequality in two space dimensions, 
 the so-called  {\em Ladyzhenskaya inequality}.\
If $D$ is a smooth domain in $\RR^2$ and $f \in H^1_0(D)$, then 
\begin{equation} \label{ladyineq2D}
\|f\|_{L^4(D)}^4 \leq 2 \|f\|_{L^2(D)}^2\|\nabla f\|_{L^2(D)}^2.
\end{equation} 
This inequality  follows immediately from
Lemma 1 on page  8 of \cite{Ladyzhenskaya1969}.

Let $\Omega = \{x=(x_1,x_2,x_3)\in\real^3 \;|\; x_1^2+x_2^2\leq 1\}=
D\times \RR$
be the infinite pipe with unit circular cross-section $D$ parallel to the
$x_3$-axis.

We consider the initial-boundary-value problem for the incompressible 
Navier-Stokes  (NSE)  and Euler equations (EE) in $\Omega$. 
We recall the notion of helically symmetric
solutions of these equations, studied in \cite{ET,MTL}.

We first  give the definition of a helical vector field and a helical
(scalar)  function. 
We denote a point in $\RR^3$ by $x=(x_1,x_2,x_3)$ in Cartesian coordinates. 
Given a non-zero number $\sigma\in \RR$, we define the action of the
helical group of transformations $G_\sigma$ on $\RR^3$ by:
\[
            S(\rho) (x) = \left (\begin{matrix} x_1\, \cos \rho +  x_2 \sin
              \rho \\  -x_1 \, \sin \rho + x_2 \, \cos \rho \\ x_3 +
              \frac{\sigma}{2\pi} \rho \end{matrix} \right), \qquad
          \rho \in \real,
\]
that is, a rotation around the $x_3$ axis with
simultaneous translation along the $x_3$ axis.
$G_\sigma$ is uniquely determined by $\sigma$, 
which we will call  the {\em step}  (or pitch). Invariant curves for the action of
the helical group $G_\sigma$ are helices  having the $x_3$ axis as axis of
symmetry.  The cylinder $\Omega$ is an
invariant set for the action of $G_\sigma$ for all $\sigma$.
A change of sign in $\sigma$ corresponds to switching the orientation
of the helices preserved by the group action from right-handed to
left-handed. Without loss of generality, we will restrict our
attention to the case of $\sigma >0$.

We will say that the smooth function $f(x)$ is {\em helically
  symmetric}, or simply {\em helical},   if  $f$
is invariant under the action of $G_\sigma$, i.e., \ 
$f(S_\rho x) = f(x)$, $\forall \rho \in \real$. Similarly, we say that
the smooth vector field $\mbfu(x)$ is helically symmetric, or simply
helical, if  it is covariant with
respect to the action of $G_\sigma$, i.e., \ $M(\rho) \mbfu(x)  =
\mbfu(S(\rho) x)$ for all $\rho \in \real$, where 
\begin{equation} \label{Mofx3}
M(\rho) : =
\left[\begin{array}{ccc}
\cos \rho & \sin\rho & 0 \\ \\
-\sin\rho & \cos\rho & 0 \\ \\
0 & 0 & 1
\end{array}\right].
\end{equation}


We find it convenient to give an alternative definition of helical
symmetry as follows.
We re-write a vector field
$\mathbf{u}(x)=(u^1,u^2,u^3)(x_1,x_2,x_3)$ with respect to the moving
orthonormal frame associated to standard cylindrical 
coordinates  $(r,\theta,z)$,
\[
  \mathbf{e_r}=(\cos\theta,\sin\theta,0), \;\;\; \mathbf{e_{\theta}}=(-\sin\theta,\cos\theta,0), \;\;\;
 \mathbf{e_z}=(0,0,1),
\]
as:
\[
  \mathbf{u} = u_r \mathbf{e_r} + u_{\theta} \mathbf{e_{\theta}} + u_z
  \mathbf{e_z},
\]
where $u_r$, $u_{\theta}$, $u_z$ are functions of $(r,\theta,z)$.
We introduce two new independent variables in place of $\theta$ and
$z$:
\begin{align}\label{var}
\eta: =\frac{\sigma}{2\pi} \theta+ z,\ \xi : = \frac{\sigma}{2\pi}
\theta - z.
\end{align}
As shown in \cite{ET} for instance, a
(smooth) function $p=p(r,\theta,z)$  is a helical function 
if and only if, when expressed in the $(r,\xi,\eta)$ variables, it is
independent of $\xi$:\  
$p=q(r,\frac{\sigma}{2\pi}\theta+ z)$, for some $q=q(r,\eta)$
Similarly,  a (smooth) vector field $\mathbf{u}$ is helical if and only if there exist
$v_r$, $v_{\theta}$, $v_z$,
functions of $(r,\eta)$ such that $u_r = v_r(r,\frac{\sigma}{2\pi} \theta + 
z)$, $u_{\theta} = v_{\theta}(r,\frac{\sigma}{2\pi}\theta +  z)$,
$u_z = v_z(r, \frac{\sigma}{2\pi} \theta + z)$. 

We note that a vector field $\mbfu$ is invariant  under the action of
$G_\sigma$ for all $\sigma\ne 0$ if and only if  $v_r$, $v_{\theta}$,
$v_z$ are  functions of $r$ only.
In particular, planar, circularly symmetric flows, that is flows for which $v_r=v_z\equiv 0$ and $v_\theta$ is a radial function, are a (very) special case of helical flows.

The change of variables $(x_1,x_2,x_3) \mapsto (r,\xi,\eta)$
introduced 
above
 has often been used to characterize helical symmetry, and, in fact,
 it does 
provide a simple, geometrically elegant description of invariance for
both 
scalar functions and vector fields. However, to obtain estimates on 
solutions of the fluid equations, we find that an alternative 
characterization actually simplifies calculations, by avoiding moving 
frames. 
As a matter of fact, we show in the following proposition that sufficiently smooth functions and
fields with helical symmetry are essentially two dimensional, in the
sense that they are uniquely determined by their trace on any
``slice'' $\Omega \cap  \{z=\text{constant}\}$, which can be
canonically identified
with the unit disk $D\subset \RR^2$.

Below we will make use of the following notation, where we employ
Cartesian coordinates and frames.
Given  $y=(y_1,y_2)$ we let $y^{\perp} = (-y_2,y_1)$ and we set 
\begin{equation} \label{E1} 
E \equiv y^{\perp}\cdot\nabla_y. 
\end{equation} 
We also use the notation $\mathbf{V}_H=(V^1,V^2,0)$ for the 
horizontal component of the vector $\mathbf{V}=(V^1,V^2,V^3)$, 
 and we  denote the vector  $(- V^2,V^1,0)$  by
 $\mathbf{V}_H^{\perp}$.

\begin{proposition} \label{HelicalCartCorr}
Let $\mathbf{u} = \mathbf{u}(x) $ be a smooth helical vector field and
let $p=p(x)$ be a smooth helical function, where
$x=(x_1,x_2,x_3)$. Then there exist {\em unique\/} $\mathbf{w}=(w^1,w^2,w^3)
=(w^1,w^2,w^3)(y_1,y_2)$ and $q=q(y_1,y_2)$ such that
\begin{equation} \label{uandpinY}
\mathbf{u}(x) = M(2\pi x_3/\sigma)\mathbf{w}(y(x)), \;\;\; p=p(x)=q(y(x)),\end{equation}
with $M(\rho)$ given in \eqref{Mofx3},
and
\begin{equation} \label{Yofx}
y(x) =
\left[\begin{array}{l}
y_1 \\ \\
y_2
\end{array}\right]
=
\left[\begin{array}{cc}
\cos(2\pi x_3/\sigma) & -\sin(2\pi x_3/\sigma) \\ \\
\sin(2\pi x_3/\sigma) & \cos(2\pi x_3/\sigma)
\end{array}\right]
\left[\begin{array}{l}
x_1\\ \\
x_2
\end{array}\right].
\end{equation}

Conversely, if $\mathbf{u}$ and $p$ are defined through \eqref{uandpinY} for some 
$\mathbf{w}=\mathbf{w}(y_1,y_2)$, $q=q(y_1,y_2)$,
then $\mathbf{u}$ is a helical vector field and $p$ is a helical scalar function.
\end{proposition}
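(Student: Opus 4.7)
The heart of the statement is that helical invariance lets one ``unwind'' the $x_3$-dependence by rotating back to the slice $\{x_3=0\}$, which is identified with $D$. The plan is to define $\mathbf{w}$ and $q$ as the traces of $\mathbf{u}$ and $p$ on that slice, and then read the formulas \eqref{uandpinY} off the covariance/invariance properties evaluated at the group element $\rho=-2\pi x_3/\sigma$. Uniqueness is then free, since $y(x_1,x_2,0)=(x_1,x_2)$ forces $\mathbf{w}(y_1,y_2)=\mathbf{u}(y_1,y_2,0)$ and $q(y_1,y_2)=p(y_1,y_2,0)$.

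First I would verify the key geometric identity
\[
S(-2\pi x_3/\sigma)\,x \;=\; (y_1(x),\,y_2(x),\,0),
\]
which follows by a direct substitution into the definition of $S(\rho)$: the third component becomes $x_3+\tfrac{\sigma}{2\pi}(-2\pi x_3/\sigma)=0$, while the first two components are exactly the rotation matrix in \eqref{Yofx} applied to $(x_1,x_2)$. Using this, I would apply helical invariance of $p$ with $\rho=-2\pi x_3/\sigma$ to obtain
\[
p(x)=p(S(-2\pi x_3/\sigma)x)=p(y_1(x),y_2(x),0)=:q(y(x)).
\]
For $\mathbf{u}$, covariance gives $M(-2\pi x_3/\sigma)\mathbf{u}(x)=\mathbf{u}(S(-2\pi x_3/\sigma)x)=\mathbf{u}(y(x),0)$; defining $\mathbf{w}(y_1,y_2):=\mathbf{u}(y_1,y_2,0)$ and using $M(-\rho)^{-1}=M(\rho)$ (which is clear from the explicit form \eqref{Mofx3}) produces the first formula in \eqref{uandpinY}.

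For the converse, given $\mathbf{w}$ and $q$ defining $\mathbf{u}$ and $p$ through \eqref{uandpinY}, I would compute
\[
y(S(\rho)x)\;=\;R\!\left(\tfrac{2\pi}{\sigma}(x_3+\tfrac{\sigma\rho}{2\pi})\right)R(-\rho)\binom{x_1}{x_2}\;=\;R(2\pi x_3/\sigma)\binom{x_1}{x_2}\;=\;y(x),
\]
where $R(\alpha)$ is the standard counterclockwise planar rotation, so that $p(S(\rho)x)=q(y(x))=p(x)$. For $\mathbf{u}$, the same identity yields $\mathbf{u}(S(\rho)x)=M(2\pi x_3/\sigma+\rho)\mathbf{w}(y(x))$, and the group homomorphism property $M(\rho_1)M(\rho_2)=M(\rho_1+\rho_2)$ (immediate from the angle-sum formulas applied to the matrix \eqref{Mofx3}) then gives $\mathbf{u}(S(\rho)x)=M(\rho)M(2\pi x_3/\sigma)\mathbf{w}(y(x))=M(\rho)\mathbf{u}(x)$, which is the helical covariance.

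The only real bookkeeping hazard is aligning the sign/orientation conventions between the clockwise rotation built into $S(\rho)$ (as defined in the paper) and the counterclockwise convention in $y(x)$ and $M(\rho)$; once the identity $S(-2\pi x_3/\sigma)x=(y(x),0)$ and the homomorphism $M(\rho_1+\rho_2)=M(\rho_1)M(\rho_2)$ are in hand, both directions of the equivalence fall out by substitution. Smoothness is preserved throughout because $y(x)$ and $M$ are smooth, so no regularity issue arises.
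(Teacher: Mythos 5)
Your proof is correct, and the sign bookkeeping (the clockwise rotation in $S(\rho)$ versus the counterclockwise one in $y(x)$, reconciled through $S(-2\pi x_3/\sigma)x=(y(x),0)$ and $M(\rho_1)M(\rho_2)=M(\rho_1+\rho_2)$) all checks out. The paper omits the proof as ``a standard application of vector calculus,'' and your argument is exactly that standard argument: unwind by the group element $\rho=-2\pi x_3/\sigma$ to reduce to the slice $\{x_3=0\}$, which also forces uniqueness.
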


We omit the proof, which is a standard application of vector calculus.

In what follows, for notational convenience we set
\[m^{\sigma}(x_3) = \left[\begin{array}{cc}
\cos(2\pi x_3/\sigma) & -\sin(2\pi x_3/\sigma) \\ \\
\sin(2\pi x_3/\sigma) & \cos(2\pi x_3/\sigma)
\end{array}\right],\]
so that 
\[y(x_1,x_2,x_3) = \left[\begin{array}{c} y_1\\ y_2 \end{array} \right] = m^{\sigma}(x_3)\left[\begin{array}{c} x_1\\ x_2 \end{array} \right],\]
and
\[M^{\sigma} (x_3) \equiv M(2\pi x_3/\sigma)= \left[\begin{array}{cc}
(m^{\sigma}(x_3))^T & 0 \\ \\
0 & 1
\end{array}\right].\]

It is clear, from Proposition  \eqref{HelicalCartCorr} above that any smooth helical flow is
periodic in $x_3$, both velocity and pressure, with period the step
$\sigma$.
We can therefore state the initial-boundary-value problem for 
 the Navier-Stokes equations in the fundamental
domain \  $\Omega^\sigma : = D \times (0,\sigma)$:
\begin{equation} \label{3DNSper}
\left\{
\begin{array}{ll}
\partial_t \mathbf{u} + (\mathbf{u}\cdot\nabla)\mathbf{u} = -\nabla p
+ \nu \Delta \mathbf{u} +\mathbf{f}, & \mbox{ in }(0,+\infty) \times \Omega^\sigma; \\
\dive\, \mathbf{u} = 0, & \mbox{ in } [0,+\infty) \times
\Omega^\sigma;\\
\mathbf{u}(t,x',x_3) = 0, & \mbox{ for  } t \in [0,+\infty), 
\quad |x'|=1, \; 0\leq x_3\leq \sigma;\\
\mathbf{u}(t,x',x_3) =\mbfu(t,x',x_3+\sigma) & \mbox{ for  } t \in [0,+\infty), 
\quad x'\in D;\\
p(t,x',x_3) =p(t,x',x_3+\sigma) & \mbox{ for  } t \in [0,+\infty), 
\quad x'\in D;\\
\mathbf{u}(0,x) = \mathbf{u}_0, & \, x\in\Omega^\sigma,
\end{array}
\right.
\end{equation}
where we set $x'=(x_1,x_2)$, so that $x= (x',x_3)$. 

The Euler equations are formally obtained by setting $\nu=0$ above and
by replacing the no-slip boundary condition $\mbfu\vert_{\pa\Omega^\sigma} =0$
with the no-penetration condition $\mbfu\cdot x' =0$ on $\pa\Omega^\sigma$.
We discuss Euler solutions in Section \ref{Euler}.

In what follows, for simplicity we set any body
forcing $\mathbf{f}\equiv 0$, and take the viscosity coefficient $\nu =1$, as
we do not contend ourselves with the vanishing viscosity limit in this
work.  We  plan to study  the interplay between the limits $\nu \to 0$ and $\sigma\to
\infty$  in future work.

We denote by $C^\alpha_{per}(\overline{\Omega^\sigma})$ the subspace
of  $C^\alpha(\overline{\Omega})$, $\alpha\in \RR_+$ of functions that are
$\sigma$-periodic in $x_3$, and by $C^{\infty}_{c,per}(\Omega^{\sigma})$ the space of functions which are $\sigma$-periodic in $x_3$  and  compactly supported in $D$ for 
each fixed $x_3 \in [0,\sigma]$.  We  also denote by
$H^1_{0,per}(\Omega^{\sigma})$ the closure of
$C^{\infty}_{c,per}(\Omega^{\sigma})$ in $H^1(\Omega^{\sigma})$, 
and
by $H^{-1}_{per}(\Omega^\sigma)$ its dual. We note
that the closure of the subspace of
$C^{\infty}_{c,per}(\Omega^{\sigma})$ of divergence-free vector field
is  the subspace   \ $\{ \mbfu \in H^1_{0,per}(\Omega^{\sigma}) \; | \;
\dive \, u =0\}$, where derivatives are taken in the weak sense.

In the remainder of the paper we will consider solutions to
\eqref{3DNSper} and the corresponding inviscid system \eqref{3DEEper}  with initial data $u_0$ of
limited regularity. More specifically, $u_0$  will be taken 
 in $H^1_{0,per}(\Omega^{\sigma})$ for Navier-Stokes and in
 $H^1_{per}(\Omega^\sigma)$ with initial vorticity $\curl \, u_0\in
 L^\infty(\Omega^\sigma)$ for Euler. We now briefly discuss helical
 symmetry in this context.

\begin{defn} \label{weakhelical}
Let $p \in H^1_{per}(\Omega^\sigma)$. We say that $p$ has helical
symmetry if there exists a sequence of smooth, helical functions $p_n$
such that
$p=\lim_{n\to \infty} p_n$ in   $ H^1_{per}(\Omega^\sigma)$. Similarly,
we say that a vector field $\mbfu$ in
$H^{1}_{per}(\Omega^{\sigma})^3$ has helical symmetry if $\mbfu$ is a
strong limit in $H^{1}_{per}(\Omega^{\sigma})^3$ of a sequence of
smooth, helical vector fields $\mbfu_n$.
\end{defn}

We next show that the characterization of helical symmetry given in
Proposition \ref{HelicalCartCorr} carries over to functions and vector
fields in $H^1$. 

\begin{proposition} \label{HelicalCartCorweak}
Let $\mbfu\in (H^1_{per}(\Omega^\sigma))^3$, $p\in
H^1_{per}(\Omega^\sigma)$ 
be, respectively,  a helical vector field and a helical
function.  Then, there exist a unique
$\mw \in H^1(D)^3$ and $q\in H^1(D)$, where $D$ is the unit disk in
$\RR^2$, such that 
\begin{equation} \label{wdefweak}
    \begin{aligned}
        &\mbfu(x) = M^\sigma(x_3) \mw(m^\sigma(x_3) x'),\\
       &p(x) = q(m^\sigma(x_3) x')),
  \end{aligned}      
 \qquad \text{a. e. }  x'\in D, \quad \forall \; 0\leq x_3\leq \sigma.
\end{equation}
Conversely, given $\mw \in H^1(D)^3$ and $q\in H^1(D)$, if $\mbfu$ and
$p$ are defined through  \eqref{wdefweak}, then $\mbfu \in
(H^1_{per}(\Omega^\sigma))^3$, $p \in (H^1_{per}(\Omega^\sigma))$, and
they have helical symmetry.
\end{proposition}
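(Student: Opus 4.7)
The plan is to lift the smooth correspondence of Proposition \ref{HelicalCartCorr} to $H^1$ by density, the key analytical ingredient being that the assignment $T_\sigma\colon \mw \mapsto \mbfu$ defined by \eqref{wdefweak} is a bounded linear isomorphism between $H^1(D)^3$ and the closed subspace of helical fields in $H^1_{per}(\Omega^\sigma)^3$. I would first verify this on smooth inputs. Since $M^\sigma(x_3)$ and $m^\sigma(x_3)$ are orthogonal and the rotation $x' \mapsto m^\sigma(x_3)x'$ maps $D$ onto itself and preserves Lebesgue measure, the substitution $y = m^\sigma(x_3)x'$ gives the exact isometry
\[
  \|\mbfu\|_{L^2(\Omega^\sigma)}^2 = \sigma\, \|\mw\|_{L^2(D)}^2,
\]
and an analogous computation for the horizontal gradient yields $\|\nabla_{x'}\mbfu\|_{L^2(\Omega^\sigma)}^2 = \sigma\,\|\nabla \mw\|_{L^2(D)}^2$. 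A chain rule for $\partial_{x_3}\mbfu$, using that $\partial_{x_3}M^\sigma$ and $\partial_{x_3}m^\sigma$ have norms of order $1/\sigma$ and that $|x'|\le 1$ on $D$, completes the picture with a two-sided estimate $c(\sigma)\|\mw\|_{H^1(D)} \le \|\mbfu\|_{H^1(\Omega^\sigma)} \le C(\sigma)\|\mw\|_{H^1(D)}$ for $\sigma$-dependent but finite positive constants.

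For the forward direction, Definition \ref{weakhelical} furnishes smooth helical $\mbfu_n \to \mbfu$ in $H^1_{per}(\Omega^\sigma)^3$, and Proposition \ref{HelicalCartCorr} gives smooth $\mw_n$ on $D$ with $\mbfu_n = T_\sigma \mw_n$. The norm equivalence shows that $\{\mw_n\}$ is Cauchy in $H^1(D)^3$, so $\mw_n \to \mw$ for some $\mw \in H^1(D)^3$, and continuity of $T_\sigma$ in $L^2$ allows one to pass to the limit and conclude $\mbfu = T_\sigma \mw$ almost everywhere in $\Omega^\sigma$. Replacing $\mbfu$ by the everywhere-defined representative $x \mapsto M^\sigma(x_3)\mw(m^\sigma(x_3)x')$ upgrades this to the pointwise-in-$x_3$ formulation stated in \eqref{wdefweak}. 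Uniqueness is immediate from evaluating at $x_3 = 0$, which forces $\mw(x') = \mbfu(x',0)$ a.e.\ on $D$. The corresponding argument for the scalar $p$ is strictly simpler, since the prefactor $M^\sigma$ is absent.

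For the converse, given $\mw \in H^1(D)^3$, pick smooth approximants $\mw_n \to \mw$ in $H^1(D)^3$ and define $\mbfu_n := T_\sigma \mw_n$. Proposition \ref{HelicalCartCorr} ensures that each $\mbfu_n$ is smooth and helical; the norm equivalence above gives $\mbfu_n \to T_\sigma\mw = \mbfu$ in $H^1(\Omega^\sigma)^3$; and $\sigma$-periodicity in $x_3$ is built into the $\sigma$-periodicity of $M^\sigma$ and $m^\sigma$. Hence $\mbfu \in H^1_{per}(\Omega^\sigma)^3$ and is helical in the sense of Definition \ref{weakhelical}; the same argument applied to $q$ handles the pressure.

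The main obstacle is the $H^1$ norm equivalence that powers the whole proof. The difficulty sits in $\partial_{x_3}\mbfu$: because the prefactor $M^\sigma(x_3)$ itself depends on $x_3$, the chain rule produces an extra term in which $\mw$ (not just $\nabla \mw$) appears, so the computation is not a bare application of the $L^2$-isometry and one must carefully exploit orthogonality of $M^\sigma$ and $m^\sigma$, together with the $1/\sigma$-smallness of their $x_3$-derivatives, to extract a clean two-sided bound. Once that estimate is established, everything that follows is a routine density argument.
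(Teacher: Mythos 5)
Your proposal is correct and follows essentially the same route as the paper: both arguments lift the smooth correspondence of Proposition \ref{HelicalCartCorr} by density, with the key ingredient being the change-of-variables comparison between $\|\mbfu\|_{H^1(\Omega^\sigma)}$ and $\|\mw\|_{H^1(D)}$ (your two-sided bound is exactly the estimate the paper invokes, and your treatment of the $\partial_{x_3}$ term matches the computation used for \eqref{EnergyTwistedEqs.4}). The only cosmetic difference is that you identify $\mw$ as the $H^1(D)$-limit of the Cauchy sequence $\mw_n$, whereas the paper defines $\mw(x',x_3):=(M^\sigma(x_3))^T\mbfu((m^\sigma(x_3))^Tx')$ directly and checks $\partial_{x_3}\mw=0$ in the weak sense; both yield the same object.
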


\begin{proof}
We only consider the case of a helical vector field $\mbfu$. The case
of a helical function is similar and simpler.
By definition, there exist helical vector fields $\mbfu_n \in
C^\infty(\Bar\Omega^\sigma)$ such that $u_n \to u$ strongly in
$H^1_{per}(\Omega^\sigma)$. By Proposition \eqref{HelicalCartCorr}, for each
$\mbfu_n$ there exists a unique, smooth $\mw_n$ such that \ 
$  \mw_n(x')= (M^\sigma(x_3))^T \mbfu_n((m^\sigma(x_3))^T x')$,
for all $x'=(x_1,x_2)\in D$. Therefore, the expression on the
right-hand side is independent of $x_3$ and 
 \ $ \nabla_x (M^\sigma(x_3))^T \mbfu((m^\sigma(x_3))^T x') =
 (\nabla_{x'} \mw(x'),0)$.
If we define 
\[
 \mw(x',x_3) :=  (M^\sigma(x_3))^T
\mbfu((m^\sigma(x_3))^T x'),
\]
then $\partial_{x_3} \mw(x',x_3)=0$ in weak sense,
since it is true for $\mw_n$ and $\nabla_x\mbfu_n\to \nabla \mbfu$
strongly in $L^2(\Omega^\sigma)$. Consequently, $\mw$ is independent of $x_3$ for
almost all $x'\in D$ (functions with vanishing weak derivatives are
constant, see e.g. \cite[Theorem 6.11]{LiebLoss}) and $\mw\in
H^1(D)$. Furthermore, 
\[
\|\mw_n -\mw\|_{H^1(D)}\leq C \sqrt{\sigma} \, \|\mbfu_n-\mbfu\|_{H^1(\Omega_\sigma)},
\]
by a simple change of variables, so that $\mw_n\to \mw$ strongly in
$H^1(D)$.
The converse statement is a direct consequence of \eqref{wdefweak}.
\end{proof}

\begin{rem} \label{helicalSobolev}
The proof of Proposition \ref{HelicalCartCorweak} shows that if $\mbfu\in
H^m(\Omega^\sigma)$, $m\in \NN$, then $\mw\in H^m(D)$ and  
the $H^m$ norm  of $\mw$ on $D$ is bounded by the $H^m$ norm of
$\mbfu$ on $\Omega^\sigma$ with
constants that depend on $\sigma$. The same result holds in
$L^p$-Sobolev spaces $W^{m,p}_{per}(\Omega^\sigma)$ for  $1\leq p
<\infty$. These spaces are defined in a manner totally analogous to 
$H^m_{per}(\Omega^\sigma)$.
\end{rem}

We next recall the notion of weak and strong Navier-Stokes solutions. 
By a classical solution of  \eqref{3DNSper} on the time interval
$[0,T]$,  we mean a vector field  $\mbfu \in
C^1([0,T];C^2(\overline{\Omega^\sigma})$, together with a function $p
\in C^1([0,T),C^1(\Omega_\sigma))$ such  that the equations, and 
the initial and boundary conditions are met pointwise in $t$ and $x$.
By a weak solution  on the time interval $[0,T)$,
 we mean a divergence-free vector field $\mbfu:[0,T) \times
 \Omega^\sigma \to \RR^3$ 
such  that  \ $\mbfu\in C_{w}([0,T);L^2(\Omega^\sigma))\cap
 L^2((0,T);H^1_{0,per}(\Omega^\sigma))$  and 
$\pa_t \mbfu \in L^1((0,T),H^{-1}_{per}(\Omega^\sigma))$, satisfying the
equations in the sense of distributions and the initial condition 
$\mbfu(0) = \mbfu_0\in L^2(\Omega^\sigma)$. Here, $C_w([0,T);L^2)$ is
the space of all functions of $t$ with values in $L^2$ that are
continuous w.r.t. the weak topology on $L^2$. 
We remark that weak solutions satisfy the 
Dirichlet (no-slip) boundary conditions at least in trace sense 
on the boundary for almost all $0<t<T$.
By a strong  solution we mean a weak solution  that satisfies in
addition \ $u \in L^\infty([0,T);H^1_{0,per}(\Omega^\sigma))\cap 
L^2((0,T);H^2_{per}(\Omega^\sigma)\cap H^1_{0,per}(\Omega^\sigma))$ and the condition 
$\mbfu_0\in H^1_{0,per}(\Omega^\sigma).$
It then follows that there exists an associated pressure function 
$p\in L^2((0,T); H^1(\Omega^\sigma))$.  
A strong helical solution will denote a strong solution that is a
helical field in the sense of Definition \ref{weakhelical}.
We recall that any strong solution of the Navier-Stokes equations is
unique and  smooth for $t>0$ (see e.g., \cite[Theorem
1.8.2]{Sohr}). Hence, strong solutions are actually classical
solutions on any time interval $[\delta,T]$, $\delta>0$.
It was shown in 
\cite[Theorem 3.4]{MTL} that weak solutions of \eqref{3DNSper} with
helical  symmetry  are unique, global in time, and agree with a strong
solution, if the initial data belongs to 
$H^1_{0,per}(\Omega^\sigma)$ and the associated pressure $p$ is also a helical function.
(See also \cite{BLNLT12} for more elaborate discussion regarding this matter.)

\section{Symmetry reduction for the Navier-Stokes equations} \label{NS}

In this section we derive a set of symmetry-reduced equations that
completely capture the dynamics of the original system under the
hypothesis of helical symmetry.

We begin by deriving the symmetry-reduced system under the hypothesis
that  $(\mbfu, p)$ are  classical solutions of \eqref{3DNSper} 
and have helical symmetry.
Let $\mathbf{w}=\mathbf{w}(t,y_1,y_2)$ be given in terms of $\mbfu$ by
Proposition \ref{uandpinY}. We will
 derive from Navier-Stokes the equations satisfied by $\mw$.
 Smoothness of $\mbfu$ and $\mw$ justifies all the algebraic
 manipulations.
For ease of notation, in this proof we write $M^T$ for  $[(M^\sigma)(x_3)]^T$.
We multiply the momentum  equation in \eqref{3DNSper}
by  $M^T$  and identify each term in the resulting
expression as follows to obtain:
\begin{subequations}
\begin{align} 
  &M^T \partial_t \mathbf{u} = \partial_t \mathbf{w}, \label{partialt} \\
 &M^T[(\mathbf{u}\cdot\nabla_x)\mathbf{u}] = (\mathbf{w}_H \cdot \nabla_y)\mathbf{w}
+ (\frac{2\pi}{\sigma})w^3  E\mathbf{w}
- (\frac{2\pi}{\sigma})w^3  \mathbf{w}_H^{\perp}, \label{nonlinear} \\
&M^T\nabla_x p = (\nabla_y q)_H + (\frac{2\pi}{\sigma})
Eq\mathbf{e_3}, \label{nablap} \\
&M^T\Delta_x\mathbf{u} = \Delta_y\mathbf{w} + (\frac{2\pi^2}{\sigma^2})[E^2\mathbf{w} -
2E\mathbf{w}_H^{\perp} - \mathbf{w}_H], \label{Laplacian}
\end{align}
\end{subequations}
where $E$ is the operator defined in \eqref{E1}.
We similarly  perform the symmetry reduction on the incompressibility
condition for $\mbfu$ to obtain
\begin{equation} \label{divfree}
 \mathrm{div}_x \mathbf{u} = \mathrm{div}_y \mathbf{w}_H +
 (\frac{2\pi}{\sigma}) Ew^3.
\end{equation}
Therefore, we find that $\mw$ and $q$ satisfy the following
initial-boundary-value problem:
\begin{subequations} \label{3DNStwistedcart}
   \begin{align}&\partial_t \mathbf{w} + (\mathbf{w}_H\cdot\nabla_y)\mathbf{w} + 
\ds{\frac{2\pi}{\sigma}}w^3[E\mathbf{w} - \mathbf{w}_H^{\perp}]
= -(\nabla_y q)_H 
\nonumber  \\
    &\qquad \qquad   -\ds{\frac{2\pi}{\sigma}}Eq\mathbf{e_3} 
  +\Delta_y \mathbf{w} + \ds{\frac{4\pi^2}{\sigma^2}} [E^2\mathbf{w}
- 2 E \mathbf{w}_H^{\perp} - \mathbf{w}_H],  \label{3DNStwistedcart1}\\  
&\mathrm{div}_y \mathbf{w}_H + \ds{\frac{2\pi}{\sigma}} E w^3 = 0,
\qquad \qquad \qquad t>0, \quad y \in D, \label{3DNStwistedcart2}\\
     &\mw(t,y) = 0,   \qquad \qquad \qquad \qquad \qquad  \quad t>0, \quad  |y|=1, \\
     & \mw(0,y) = \mw_0(y),  \qquad \quad \qquad \qquad \qquad y \in D,
   \end{align}
\end{subequations}
where $\mw_0$ is related to $\mbfu_0$ via \eqref{uandpinY}.

Before giving a weak formulation of the above initial-boundary-value
problem, we note that the operator $E=y^\perp\cdot \nabla_y$  is
 anti-selfadjoint, i.e., $E^\ast =  -E$, since $\dive_y \, y^\perp=0$.
If  we write \eqref{3DNStwistedcart2} as  \ $A\, \mw=0$, 
for some matrix operator $A$ with $\mw$ a column vector,  it
follows that $A$ and its adjoint $A^\ast$ are given by:
\[
      A := \left [\begin{matrix} \pa_{y_1}, & \pa_{y_2},  &
          \frac{2\pi}{\sigma} E \end{matrix} \right],
     \qquad A^\ast :=  \left[ \begin{matrix}  - \pa_{y_1} \\ -
         \pa_{y_2} \\  -\frac{2\pi}{\sigma} E
     \end{matrix} \right].
\]
It can be easily checked that  the (scalar) second-order operator \
$A\,A^\ast = - \Delta_y -
\frac{4\pi^2}{\sigma^2} E^2 $ \ is elliptic for any $\sigma\ne 0$.

 We will call  a vector field $\mw$ on $[0,T)\times D$ 
a weak solution of \eqref{3DNStwistedcart} if  $\mw \in C_{w}([0,T);L^2(D))\cap
 L^2((0,T);H^1_0(D))$,   
$\pa_t \mw \in L^1((0,T);H^{-1}(D)$,  $\mw(0)=\mw_0 \in L^2(D)$, $\mw$
satisfies the  constraint
\eqref{3DNStwistedcart2} in th sense of distributions, and for all
(vector-valued) test functions
$\mPhi \in C^\infty_c([0,T)\times D)$  that satisfy \eqref{3DNStwistedcart2}, 
\begin{multline} \label{3DNStwistedweak}
    \int_0^t \int_D \mw\cdot \pa_t \mPhi \, dy\, dt  +
    \frac{2\pi}{\sigma} \int_0^t \int_D w^3 ( \mPhi \cdot \mw^\perp_H +
    E \mPhi \cdot \mw)\,
    dy \, dt +   \\
    \int_0^t \int_D   \Delta \mPhi \cdot
    \mw\, dy\, dt  + \frac{4\pi^2}{\sigma^2}  \int_0^t \int_D \left ( E^2 \mPhi \cdot \mw +
    2 E\mPhi \cdot \mw_H^\perp \right) \, dy\, dt \\
     - \frac{4\pi^2}{\sigma^2}  \int_0^t \int_D \mPhi\cdot \mw_H \,
     dy\, dt= \int_D \mPhi(0) \cdot \mw(0)\, dy.
\end{multline}
A weak solution will be called a strong solution if, in addition, 
$\mw \in L^\infty([0,T);H^1_0(D))$
$\cap  L^2((0,T);H^2(D)\cap H^1_0(D))$ and 
$\mbfu_0\in H^1_0(D).$  By interpolation then, $\mw\in
C((0,T);H^1_0(D))$ (c.f. e.g. \cite[Lemma 4.8 p. 570]{TayIII}).
By projecting the momentum equation \eqref{3DNStwistedcart1} onto the
kernel of the operator $A$, one obtains an elliptic equation for the
pressure $q$: 
\begin{equation} \label{qequation}
           A\,A^\ast q  =  A\, \left[  (\mathbf{w}_H\cdot\nabla_y)\mathbf{w} - 
\frac{2\pi}{\sigma} w^3\, (E\mathbf{w} - \mathbf{w}_H^{\perp})
 - \frac{4\pi^2}{\sigma^2} 
(2 E \mathbf{w}_H^{\perp} + \mathbf{w}_H) \right], 
\end{equation}
 and  by elliptic regularity, it follows that  $q \in L^1([0,T); H^1(D))$.

In the following proposition we establish the relationship between
strong solutions to the Naviers-Stokes system \eqref{3DNSper} and
strong solutions of the symmetry-reduced system \eqref{3DNStwistedcart}.

\begin{proposition} \label{helicalYvariables}
Let $\mbfu_0\in H^1_{0,per}(\Omega^\sigma)$ be a divergence-free, helical vector field. 
Let  $\mbfu$ be the unique, strong helical solution of \eqref{3DNSper}
on $[0,T)$, for any $T>0$,
 with initial condition $\mbfu_0$ and associated pressure function $p$.
 Then,  the vector function $\mathbf{w}=
(w^1,w^2,w^3)$ and scalar function  $q$, defined through
\eqref{wdefweak} from $\mbfu$ and $p$, give a strong solution of the reduced system
\eqref{3DNStwistedcart}.

Conversely,  let $\mw$ be a strong  solution of
\eqref{3DNStwistedcart} 
and associated pressure $q$. Then, if
$\mbfu$ and $p$ are defined   from $\mw$ and $q$ via \eqref{wdefweak},
$\mbfu$ is a strong helical solution of \eqref{3DNSper}. In
particular, strong solutions of \eqref{3DNStwistedcart}  are unique.
\end{proposition}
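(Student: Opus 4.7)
The plan is to view the statement as an intertwining result between two function spaces: helically symmetric strong solutions of \eqref{3DNSper} on $\Omega^\sigma$ on one side, and strong solutions of the reduced system \eqref{3DNStwistedcart} on $D$ on the other. Proposition \ref{HelicalCartCorweak}, supplemented by Remark \ref{helicalSobolev}, already supplies a bijection at the level of Sobolev regularity, with norms equivalent up to $\sigma$-dependent constants. What must be verified is that this bijection intertwines the two systems. Since the identities \eqref{partialt}--\eqref{divfree} were derived under classical regularity, the strategy is to promote them to the strong-solution setting via the smoothing property of the Navier--Stokes flow, and then to match weak formulations by identifying the corresponding test-function spaces.

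For the forward direction, I would use the fact, recalled in the excerpt, that any strong solution $\mbfu$ of \eqref{3DNSper} is smooth for $t>0$. Hence on each slab $[\delta,T]\times\Omega^\sigma$, $\delta>0$, the derivation of \eqref{partialt}--\eqref{divfree} applies pointwise. Proposition \ref{HelicalCartCorweak} then produces a corresponding $\mw(t,\cdot)\in H^1(D)$ solving \eqref{3DNStwistedcart1}--\eqref{3DNStwistedcart2} in the classical sense on that slab, with an associated pressure $q$ obtained from $p$ through \eqref{wdefweak}. The regularity required of a strong solution of the reduced system, namely $\mw\in L^\infty(H^1_0)\cap L^2(H^2\cap H^1_0)$, follows from the analogous bounds on $\mbfu$ together with Remark \ref{helicalSobolev}. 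Letting $\delta\to 0^+$ and using the $C_w([0,T);L^2)$ continuity of $\mbfu$ at $t=0$ identifies the initial datum $\mw_0$ via \eqref{wdefweak}.

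For the converse, given a strong solution $\mw$ of \eqref{3DNStwistedcart} with pressure $q$, define $\mbfu$ and $p$ through \eqref{wdefweak}. The converse half of Proposition \ref{HelicalCartCorweak} and Remark \ref{helicalSobolev} yield that $\mbfu\in L^\infty(H^1_{0,per})\cap L^2(H^2_{per}\cap H^1_{0,per})$ is helical with the right boundary behavior, and $p\in L^2(H^1)$. The incompressibility of $\mbfu$ reduces exactly to the constraint \eqref{3DNStwistedcart2} through \eqref{divfree}. To recover the momentum equation, multiply \eqref{3DNStwistedcart1} by $M^\sigma(x_3)$ on the left and change variables $y=m^\sigma(x_3)x'$: the identities \eqref{partialt}--\eqref{Laplacian} then reassemble into the original \eqref{3DNSper} termwise. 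Since this computation is purely algebraic once the regularity is known, it extends from smooth approximants to the strong-solution class by density. Uniqueness of strong solutions of \eqref{3DNStwistedcart} is inherited through the same bijection from uniqueness of strong helical solutions of \eqref{3DNSper}, established in \cite{MTL}.

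The main obstacle I anticipate is the first step of the forward direction: ensuring that a strong helical solution, defined in the weak sense of Definition \ref{weakhelical}, inherits sufficient smoothness for $t>0$ to make the pointwise derivation of \eqref{partialt}--\eqref{divfree} rigorous, and that this smoothness is compatible with the helical symmetry (i.e., the smoothing is symmetry-preserving). A parallel subtlety is the correct treatment of the pressure: in the reduced formulation $q$ is determined by the elliptic problem \eqref{qequation} through the operator $A A^\ast$, and one must verify that the pressure $p$ obtained from $q$ via \eqref{wdefweak} agrees, up to an additive function of $t$, with the pressure of the lifted Navier--Stokes solution. Once these regularity and pressure issues are settled, the remainder of the proof is a mechanical application of the identities already established.
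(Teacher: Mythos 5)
Your forward direction takes a genuinely different route from the paper. The paper never invokes the instantaneous smoothing of the flow: it approximates the initial datum $\mbfu_0$ by smooth helical fields $\mbfu_{0,n}$ (this is available by Definition \ref{weakhelical}), takes the corresponding \emph{classical} helical solutions $\mbfu_n$, applies the pointwise derivation of \eqref{partialt}--\eqref{divfree} to each $\mbfu_n$ to get classical reduced solutions $\mw_n$, and then passes to the limit in \emph{both} families simultaneously using uniform bounds, Rellich/Aubin compactness, and the weak--strong uniqueness of helical Navier--Stokes solutions to identify the limits. Your route instead works with the single solution $\mbfu$, uses smoothness on $[\delta,T]$ to justify the algebra, and recovers $t=0$ by continuity. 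Both are viable; yours is arguably more economical once its hypotheses are secured, while the paper's approximation scheme automatically resolves the two issues you flag (symmetry of the smooth solution and identification of the pressure), because the smooth approximants are helical by construction and the pressures $p_n$, $q_n$ converge along with the velocities.

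The one step you name but do not supply is genuinely needed and is not automatic from the definitions: a strong helical solution is helical only in the weak sense of Definition \ref{weakhelical}, and you must show that for $t>0$ the (now classical) solution is helical in the \emph{pointwise} covariance sense required by Proposition \ref{HelicalCartCorr} before the slab-by-slab derivation applies. This can be closed: the relation $M(\rho)\mbfu_0(x)=\mbfu_0(S(\rho)x)$ holds a.e.\ for $\mbfu_0$ because it passes to the $H^1$-limit from the smooth approximants; then for each fixed $\rho$ the field $M(\rho)^{T}\mbfu(t,S(\rho)x)$ is a strong solution of \eqref{3DNSper} with the same initial datum, so uniqueness of strong solutions forces it to coincide with $\mbfu$, giving pointwise helical symmetry for every $t$. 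A parallel uniqueness argument (or the elliptic equation \eqref{qequation} together with the normalization of the pressure) settles your second flagged point, that the $q$ produced by \eqref{wdefweak} from $p$ is the pressure of the reduced system. With these two additions your argument is complete; as written, it defers exactly the steps on which the equivalence hinges. In the converse direction your appeal to ``density from smooth approximants'' is unnecessary and slightly delicate (there is no a priori supply of smooth reduced solutions approximating $\mw$ without circularity); since $\mw\in L^2(H^2\cap H^1_0)$, the chain-rule identities \eqref{partialt}--\eqref{Laplacian} can simply be verified directly at that regularity.
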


\begin{proof}
By Definition \ref{weakhelical}, there exists a sequence of smooth,
helical functions $\mbfu_{0,n}$ on $\Omega^\sigma$ such that 
\ $\mbfu_{0,n} \to \mbfu_0$ strongly in $H^1_{0,per}(\Omega^\sigma)$.
Let $\mbfu_n$ be the unique, classical  helical solution of
\eqref{3DNStwistedcart} with initial data $\mbfu_{0,n}$, and pressure $p_n$. The sequence 
$\{\mbfu_n\}$ is uniformly bounded in $L^\infty([0,T);H^1_{0,per}(\Omega^\sigma))\cap 
L^2((0,T);H^2_{per}(\Omega^\sigma)\cap H^1_{0,per}(\Omega^\sigma))$
and $\{\pa_t \mbfu_n\}$ is uniformly bounded in $L^1([0,T);H^{-1}_{per}(\Omega^\sigma))$.
Therefore, by interpolation and Rellich's theorem,  there exists a subsequence converging strongly in
$H^{-\epsilon}([0,T);H^1_{0,per}(\Omega^\sigma))\cap L^2((0,T);H^{1-\epsilon}_{per}(\Omega^\sigma))$, for all $\epsilon>0$, 
weakly in $L^2((0,T);H^2(\Omega^\sigma))$, and weakly-$\ast$ in 
$L^\infty([0,T);H^1_{0,per}(\Omega^\sigma))$,
such that $\pa_t \mbfu_n$ converges weakly 
 in $L^1((0,T);H^{-1}(D))$. The limit  $\mbfu$ is then a weak solution  of \eqref{3DNStwistedcart} with
initial data  $\mbfu_0$ (by arguments similar to those showing
existence of Leray-Hopf weak solutions, cf. \cite[Theorem 5.9,
Chap. 17]{TayIII}. )

Since weak solutions agree with strong
solutions as long as the latter exists, we must have that $\mbfu$ is
the unique, strong helical solution of \eqref{3DNStwistedcart} with
initial data $\mbfu_0$. Hence, the whole sequence $\{\mbfu_m\}$
converges to $\mbfu$ by uniqueness of  the
limit. A similar argument gives convergence of $p_n$ to $p$ in
$L^1((0,T);H^1(D))$ .

Let now $\mw_n$ be associated to $\mbfu_n$ by \eqref{uandpinY}. Then,
$\mw_n$ is a classical solution of \eqref{3DNStwistedcart}, with
associated pressure $q_n$ given by \eqref{uandpinY} in terms of $p_n$,
by the
calculations at the beginning of this section. Furthermore,  the proof
of Proposition \ref{weakhelical} implies that all Sobolev norms of $\mw_n$
and $q_n$ are bounded by
the corresponding Sobolev norms of $\mbfu_n$ with constants depending
on $\sigma$. Hence, the sequence $\{\mw_n\}$ is uniformly bounded in 
$L^\infty([0,T);H^1_{0}(D))\cap 
L^2((0,T);H^2(D)\cap H^1_{0}(D))$.  From the equations, it follows that $\pa_t \mw_n$ is uniformly
bounded in $L^1((0,T);H^{-1}(D))$.
Hence, by interpolation and Rellich's theorem there exists a
subsequence converging 
strongly in $H^{-\epsilon}([0,T);H^1_0(D))\cap
L^2((0,T);H^{1-\epsilon}(D))$, for all 
$\epsilon>0$,  weakly in   
$L^2((0,T);H^2(D))$, and weakly-$\ast$ in $L^\infty([0,T);H^1_0(D))$ to a
weak solution $\mw$ of the symmetry-reduced system
\eqref{3DNStwistedcart}.
Since $\mw \in L^\infty([0,T);H^1_{0}(D))\cap 
L^2((0,T);H^2(D)\cap H^1_{0}(D))$,
$\mw$ is a strong solution of the reduced system.  Also, by refining
the subsequence if needed, we can assume that $\{q_n\}$ converges
weakly in  $L^1((0,T);H^1(D))$. Furthermore, the convergence of
$\mbfu_n$ to $\mbfu$ implies weak convergence of the right-hand side
of \eqref{qequation} in $L^1(0,T);H^{-1}(D))$ and, hence, $q$ is a
weak solution of the pressure equation. Lastly, since  $\mw$
and $q$ in \eqref{wdefweak} are unique,  given $\mbfu$ and $p$, these
must agree with the limits of $\mbfu_n$ and $p_n$. The first half of
the theorem is established.

The converse follows by similar arguments, using again the
uniqueness in the relation between $\mbfu$, $p$ with $\mw$, $q$ of
Proposition \ref{HelicalCartCorweak}. Energy estimates for strong solutions
of the symmetry-reduced equations are given in Propositions
\ref{EnergyTwistedEqsPrime} and \ref{EnergyTwistedEqs}. 
Uniqueness of strong solutions to the
reduced equations then follows from uniqueness of helical, strong
solutions of the Navier-Stokes equations.
\end{proof}

\section{The limit $\sigma \to \infty$ for the Navier-Stokes system} \label{limit}

The purpose of this section is to discuss the limit $\sigma \to
\infty$ for helical solutions of the Navier-Stokes equations. To emphasize the dependence of the solution on the parameter $\sigma$, we will write 
$\mbfu^\sigma$ and $p^\sigma$ for $\mbfu$ and $p$.

Next, we recall that to any helical vector field $\mbfu^\sigma$ in
$H^1(\Omega^\sigma)$ we can associate a three-component vector
function $\mw^\sigma$ in $H^1(D)$ by means of  Proposition \ref{HelicalCartCorweak}.
The divergence-free condition on $\mathbf{u}^\sigma$ is recast as
\eqref{3DNStwistedcart2}  for $\mathbf{w}^\sigma$.  
In what follows, we will need to relate divergence-free vector fields in $D$ 
to fields satisfying  the condition in \eqref{3DNStwistedcart2}.
To this end, we will exploit the following useful lemma.

\begin{lemma} \label{GaldisLemma}
There exists a constant $C>0$ such that, for every $f \in L^2(D)$ with $\int_D f(x) \, dx = 0$, there exists a vector field $\mathbf{v} \in H^1_0(D)$ satisfying
\[\mathrm{div}_y \mathbf{v} = f\;\;\;\mbox{and}\]
\[\|\nabla \mathbf{v}\|_{L^2(D)} \leq C \|f\|_{L^2(D)}.\]
\end{lemma}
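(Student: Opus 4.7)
The plan is to invoke the classical Bogovskii construction for the divergence equation with homogeneous Dirichlet boundary conditions on a bounded Lipschitz domain. Since $D$ is the unit disk, it is star-shaped with respect to a ball centered at the origin, so Bogovskii's explicit integral formula applies directly. In the interest of self-containment one could reproduce the construction, but the cleanest exposition is to cite Galdi's monograph on the Navier--Stokes equations, where the result is proved in full generality.

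Explicitly, I would fix a cutoff $\omega \in C^\infty_c(D)$ with $\int_D \omega = 1$ and set
\[
\mathbf{v}(x) := \int_D K(x,y)\, f(y)\, dy,
\]
where $K(x,y)$ is the Bogovskii kernel, constructed so that $\mathrm{div}_x K(x,y) = \delta(x-y) - \omega(x)$ in the distributional sense. Using the mean-zero hypothesis $\int_D f = 0$ one obtains $\mathrm{div}\,\mathbf{v} = f$ in $D$, and the support properties of $K$ together with the smoothness of $\partial D$ ensure that $\mathbf{v} \in H^1_0(D)$.

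The substantive content is the estimate $\|\nabla \mathbf{v}\|_{L^2(D)} \leq C\|f\|_{L^2(D)}$. Each component of $\nabla \mathbf{v}$ is a singular integral operator of Calder\'on--Zygmund type applied to $f$, and the standard $L^2$ theory for such operators then yields the bound. Equivalently, by the closed range theorem, the lemma is dual to the Ne\v{c}as inequality
\[
\|p\|_{L^2(D)} \leq C\,\|\nabla p\|_{H^{-1}(D)}
\]
for mean-zero $p \in L^2(D)$, a well-known fact on any bounded Lipschitz domain, and one can derive the surjectivity of the divergence along with the quantitative bound from this inequality.

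The only real obstacle is bibliographical rather than technical: the result is classical and well-documented, so the main choice is whether to sketch Bogovskii's kernel explicitly or simply cite Galdi. Since nothing about the lemma is specific to the helical setting, no further estimates beyond those already available in the literature need to be produced here.
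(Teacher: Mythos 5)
Your proposal is correct and takes essentially the same route as the paper: the authors simply observe that $D$ is star-shaped and cite Lemma III.3.1 of Galdi's monograph, which is precisely the Bogovskii construction you describe. Your additional sketch of the kernel, the Calder\'on--Zygmund estimate, and the duality with the Ne\v{c}as inequality is accurate but goes beyond what the paper records.
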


\begin{proof}
Since $D$ is clearly star-shaped, this is a special case of Lemma
III.3.1 on page 116 of \cite{G1}.
\end{proof}

We note that $\mbfv$ is not uniquely determined. In fact, we can add
to $\mbfv$ any 
divergence-free vector field in $D$, satisfying the $H^1$ bound above.
The vector field $\mbfv$ can be made unique by assuming, for example,
that it is curl free.

Next, we will state and prove several energy-type estimates for
$\mw^\sigma$. These follow  
from corresponding bounds for $\mbfu^\sigma$ thanks to Proposition  
\ref{helicalYvariables}, but we derive them here keeping track of the
precise dependence on 
the parameter $\sigma$.

Given a helical vector field $\mbfu_0 \in H^1_{0,per}(\Omega^\sigma)$, 
Proposition \ref{helicalYvariables} gives a one-to-one correspondence between strong helical solutions of \eqref{3DNSper} and strong solutions
of \eqref{3DNStwistedcart} with initial data $\mw_0\in H^1_0(D)$ satisfying
\begin{equation} \label{TwistedDivFreeCond}
\mbox{div}_y \left[ (\mathbf{w}_0^{\sigma})_H\right] + \frac{2\pi}{\sigma}E[(w^{\sigma,3}_0)] = 0,
\end{equation}
where $w_0^{\sigma,3}$ refers to the third component of $\mathbf{w}_0^{\sigma}$, and
$\mbfu_0$ and $\mw_0$ are related via \eqref{wdefweak}.
In particular, $\mw \in C([0,T),H^1_0(D))$.

We remark that for any helical vector field 
$\mbfu^\sigma_0$ for which the component along the axis of the pipe,
$u^{\sigma,3}_0$, is a 
radial function, the symmetry-reduced constraint on the divergence is
in fact simply the 
divergence-free constraint in 2D for $(\mw^\sigma_0)_H$, since in this case
$E\,w^{\sigma,3}_0\equiv 0$. In this special case, the analysis  is 
considerably simplified.
We may now state our next results, consisting of energy estimates for 
$\mathbf{w}^{\sigma}$. We split these into two propositions, the first
valid for all  $\sigma > 0$ and the second valid for large $\sigma$.

\begin{proposition} \label{EnergyTwistedEqsPrime}
Given $\sigma>0$, let $\mw^\sigma$ be a strong solution of
\eqref{3DNStwistedcart} on the time interval $[0,T)$.
Then, for all $t\in (0,T)$, we have that
\begin{equation} \label{eq.energytwisted}
\begin{aligned}
 \int_D |\mathbf{w}_0^{\sigma}&(y)|^2\,dy=
 \int_D |\mathbf{w}^{\sigma}(t,y)|^2\,dy + 2\int_0^t\int_D|\nabla \mathbf{w}^{\sigma}(s,y)|^2\,dy\,ds  \\ 
& +2\int_0^t\int_D \frac{4\pi^2}{\sigma^2}
\left[(E (w^{\sigma,3})  )^2 + |E\mathbf{w}_H^{\sigma} - (\mathbf{w}_H^{\sigma})^{\perp}|^2 \right] \,dy\,ds. 
\end{aligned}
\end{equation}
\end{proposition}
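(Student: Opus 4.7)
The identity \eqref{eq.energytwisted} is the $L^2(D)$-energy balance for the reduced system. My plan is to test \eqref{3DNStwistedcart1} pointwise against $\mw$, integrate over $D$, and then integrate in time from $0$ to $t$. Strong-solution regularity $\mw\in L^\infty(0,T;H^1_0(D))\cap L^2(0,T;H^2(D))$ forces $\partial_t\mw\in L^2(0,T;L^2(D))$: reading $\partial_t\mw$ off the equation and using the two-dimensional embedding $H^1\hookrightarrow L^p$ for every $p<\infty$ to bound $(\mw_H\cdot\nabla_y)\mw$ in $L^2_tL^2_x$, every right-hand-side term is easily controlled. Hence $t\mapsto\|\mw(t)\|_{L^2}^2$ is absolutely continuous with $\tfrac{d}{dt}\|\mw\|^2 = 2(\partial_t\mw,\mw)$, legitimizing the time-integration below. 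All boundary terms in the integrations by parts vanish, because $\mw=0$ on $\partial D$ and because $y^\perp$ is tangent to $\partial D$ (so that $E^\ast = -E$ on any pair of functions, with no boundary contribution). Alternatively, one may appeal to Proposition \ref{helicalYvariables}: $\mw$ corresponds to a strong helical Navier--Stokes solution, smooth for $t>0$, and the manipulations below can be done classically on $[\epsilon,T)$ and then extended to $\epsilon\downarrow 0$ by continuity of $\|\mw(t)\|_{L^2}^2$.

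\emph{First-order cancellations.} The pressure contribution is
\[
\big(-(\nabla_y q)_H - \tfrac{2\pi}{\sigma} Eq\,\mbfe_3,\,\mw\big) = \int_D q\big[\dive_y\mw_H + \tfrac{2\pi}{\sigma} Ew^3\big]\,dy = 0,
\]
by the constraint \eqref{3DNStwistedcart2}. Next, the standard rearrangement of the convection combined with the constraint gives
\[
((\mw_H\cdot\nabla_y)\mw,\mw) = -\tfrac12(\dive_y\mw_H,|\mw|^2) = \tfrac{\pi}{\sigma}(Ew^3,|\mw|^2),
\]
while $\tfrac{2\pi}{\sigma}(w^3 E\mw,\mw) = \tfrac{\pi}{\sigma}(w^3,E|\mw|^2) = -\tfrac{\pi}{\sigma}(Ew^3,|\mw|^2)$ after integrating $E$ by parts; these two terms cancel exactly. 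Finally, $-\tfrac{2\pi}{\sigma}(w^3\mw_H^\perp,\mw)$ vanishes pointwise because $\mw_H^\perp\cdot\mw = \mw_H^\perp\cdot\mw_H = 0$.

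\emph{Dissipation and the $\sigma^{-2}$ combination.} The Laplacian yields $(\Delta_y\mw,\mw) = -\|\nabla_y\mw\|_{L^2}^2$. The remaining $\tfrac{4\pi^2}{\sigma^2}$-cluster combines, after integrating $E^2$ by parts once, into
\[
-\tfrac{4\pi^2}{\sigma^2}\int_D\big[|E\mw|^2 + 2(E\mw_H^\perp)\cdot\mw_H + |\mw_H|^2\big]\,dy.
\]
Using $|E\mw|^2 = |E\mw_H|^2 + (Ew^3)^2$ together with the coordinatewise identity $|E\mw_H - \mw_H^\perp|^2 = |E\mw_H|^2 + |\mw_H|^2 + 2(E\mw_H^\perp)\cdot\mw_H$ (which follows from the antisymmetry $(E\mw_H)\cdot\mw_H^\perp = -(E\mw_H^\perp)\cdot\mw_H$), the bracket collapses to $(Ew^3)^2 + |E\mw_H - \mw_H^\perp|^2$. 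Assembling everything gives
\[
\tfrac12\tfrac{d}{dt}\|\mw\|_{L^2}^2 = -\|\nabla_y\mw\|_{L^2}^2 - \tfrac{4\pi^2}{\sigma^2}\int_D\big[(Ew^3)^2 + |E\mw_H - \mw_H^\perp|^2\big]\,dy,
\]
and integrating from $0$ to $t$ produces \eqref{eq.energytwisted}. The argument is essentially algebraic; the only point requiring caution is the passage from distributional to pointwise equality at the strong-solution level, which is handled by the regularity identified above, and no other serious obstacle is anticipated.
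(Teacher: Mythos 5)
Your proposal is correct and follows the same route as the paper: the paper's proof consists of the single observation that $\mw^\sigma$ has enough regularity to be used as a test function, so one multiplies \eqref{3DNStwistedcart} by $\mw^\sigma$ and integrates over $D$ and in time, and states that this "easily yields" the identity. You have simply carried out in full the algebra the paper leaves implicit — the pressure/constraint cancellation, the cancellation of the convection term against $\tfrac{2\pi}{\sigma}w^3E\mw$, and the regrouping of the $\sigma^{-2}$ terms into $(Ew^{\sigma,3})^2+|E\mw_H^\sigma-(\mw_H^\sigma)^\perp|^2$ — and all of these steps check out.
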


\begin{proof}
We simply observe that $\mathbf{w}^{\sigma}$ has enough regularity to be a test function in the weak formulation of 
\eqref{3DNStwistedcart}, so we are justified in  multiplying \eqref{3DNStwistedcart} by  $\mathbf{w}^{\sigma}$ and integrating over the domain $D$ and, subsequently, in time. This easily yields the desired identity.
\end{proof}

\begin{proposition} \label{EnergyTwistedEqs}
Let $1 \leq \sigma < \infty,$ and fix $T>0$.
Let $\mbfu^\sigma$ be a strong helical  solutions of \eqref{3DNSper}
on the interval $[0,T)$.
Let $\mw^\sigma$ be the corresponding symmetry-reduced flow, which solves \eqref{3DNStwistedcart}.
 Then the following hold:
\begin{enumerate}
\item There exists $C>0$, independent of $\sigma$, such that
\[
\begin{array}{l}
 \|\partial_t\mathbf{w}^{\sigma}\|_{L^2((0,T);H^{-1}(D))} 
 \leq C(\|\mathbf{w}^{\sigma}\|_{L^{\infty}((0,T);L^2(D))} + 1) \|\nabla\mathbf{w}^{\sigma}\|_{L^2((0,T);L^2(D))} . 
 \end{array}
\] \label{EnergyTwistedEqs.1}

\item There exists $C>0$, independent of $\sigma$, such that
\[
\begin{array}{l}
 \|q^{\sigma}\|_{L^2((0,T);L^2(D))} \leq C \left(\|\partial_t\mathbf{w}^{\sigma}\|_{L^2((0,T);H^{-1}(D))} \right. \\ 
 \qquad \qquad + \left. (\|\mathbf{w}^{\sigma}\|_{L^{\infty}((0,T);L^2(D))} + 1) \|\nabla\mathbf{w}^{\sigma}\|_{L^2((0,T);L^2(D))} \right) .
 \end{array}
\] \label{EnergyTwistedEqs.2}

\item Moreover, the following scaling holds:
\[
\|\mathbf{u}_0^{\sigma}\|_{L^2(\Omega^{\sigma})} = \sqrt{\sigma} \| \mathbf{w}_0^{\sigma}\|_{L^2(D)},
\] \label{EnergyTwistedEqs.3}

\item and we also have
\[
 \begin{gathered}
  \|\nabla_H\mathbf{u}_0^{\sigma}\|_{L^2(\Omega^{\sigma})} \leq \sqrt{\sigma} \| \nabla\mathbf{w}_0^{\sigma}\|_{L^2(D)}, \\
 \|\partial_{x_3}\mathbf{u}_0^{\sigma}\|_{L^2(\Omega^{\sigma})} \leq \frac{1}{\sqrt{\sigma}} \| \mathbf{w}_0^{\sigma}\|_{H^1(D)} .
\end{gathered}
\] \label{EnergyTwistedEqs.4}

\end{enumerate}
\end{proposition}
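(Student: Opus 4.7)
The plan is to treat the four estimates separately, starting with the scaling identities (3)--(4) and then turning to the energy-type bounds (1)--(2). Throughout, the hypothesis $\sigma\geq 1$ is used to absorb all $\sigma$-dependent coefficients by their values at $\sigma=1$, so that constants remain uniform in $\sigma$.

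First I would address (3) and (4). Starting from the representation $\mbfu^\sigma(x) = M^\sigma(x_3)\mw^\sigma(m^\sigma(x_3)x')$ provided by Proposition~\ref{HelicalCartCorweak}, and using the orthogonality of both $M^\sigma(x_3)$ and $m^\sigma(x_3)$, one has $|\mbfu^\sigma(x)|^2 = |\mw^\sigma(y(x))|^2$ pointwise; integrating over $\Omega^\sigma = D\times(0,\sigma)$ and changing variables $y = m^\sigma(x_3)x'$ at fixed $x_3$ yields (3). For (4), the chain rule gives $\partial_{x_i}\mbfu^\sigma = M^\sigma\sum_k m^\sigma_{ki}\partial_{y_k}\mw^\sigma$ for $i=1,2$, and the same orthogonality argument produces the $\nabla_H$ estimate. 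The $x_3$-derivative picks up factors $\partial_{x_3}M^\sigma = (2\pi/\sigma)M'$ and $\partial_{x_3}m^\sigma = (2\pi/\sigma)m'$ from both sides, giving the pointwise bound $|\partial_{x_3}\mbfu^\sigma|\leq C\sigma^{-1}(|\mw^\sigma|+|\nabla_y\mw^\sigma|)$; squaring and integrating produces the $1/\sqrt{\sigma}$ scaling.

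For (1), I would test the momentum equation \eqref{3DNStwistedcart1} against $\mPhi\in H^1_0(D)^3$ satisfying the $\sigma$-divergence-free constraint $\dive_y\mPhi_H + (2\pi/\sigma)E\Phi^3=0$, so that the pressure combination $A^*q$ drops out. Each remaining term is then estimated in the $H^{-1}(D)$ dual norm. The nonlinear term $\int(\mw^\sigma_H\cdot\nabla_y)\mw^\sigma\cdot\mPhi\,dy$ is the main challenge; integrating by parts and invoking the divergence identity $\dive_y\mw^\sigma_H=-(2\pi/\sigma)E w^{\sigma,3}$ from \eqref{3DNStwistedcart2} rewrites it as a sum bounded by $C\|\mw^\sigma\|_{L^4(D)}^2\|\nabla\mPhi\|_{L^2(D)}$, the $1/\sigma$ coefficients being harmless because $\sigma\geq 1$. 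The Ladyzhenskaya inequality \eqref{ladyineq2D} then yields $\|\mw^\sigma\|_{L^4(D)}^2\leq C\|\mw^\sigma\|_{L^2(D)}\|\nabla\mw^\sigma\|_{L^2(D)}$; squaring, integrating in time, and factoring out $\|\mw^\sigma\|_{L^\infty L^2}$ reproduces the dominant prefactor. The linear $\Delta_y\mw^\sigma$ term and the $O(1/\sigma^2)$ corrections contribute $\|\nabla\mw^\sigma\|_{L^2}\|\mPhi\|_{H^1_0}$ pointwise, accounting for the additive ``$+1$'' in the prefactor.

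Finally, for (2), I would apply Lemma~\ref{GaldisLemma} after normalizing $q^\sigma(t,\cdot)$ to have zero mean on $D$: this produces $\mbfv(t,\cdot)\in H^1_0(D)^2$ with $\dive_y\mbfv = q^\sigma$ and $\|\nabla\mbfv\|_{L^2(D)}\leq C\|q^\sigma\|_{L^2(D)}$. Then
\[
\|q^\sigma\|_{L^2(D)}^2 = \int_D q^\sigma\,\dive_y\mbfv\,dy = -\int_D (\nabla_y q^\sigma)_H\cdot\mbfv\,dy,
\]
and the horizontal projection of \eqref{3DNStwistedcart1} lets me replace $(\nabla_y q^\sigma)_H$ by $-\partial_t\mw^\sigma_H$ together with the nonlinear and remaining linear pieces. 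Pairing with $\mbfv$, the time derivative contributes $\|\partial_t\mw^\sigma\|_{H^{-1}(D)}\|\mbfv\|_{H^1_0}$; the nonlinear term, treated as in (1), gives $C\|\mw^\sigma\|_{L^2}\|\nabla\mw^\sigma\|_{L^2}\|\nabla\mbfv\|_{L^2}$; the linear and lower-order terms give $C\|\nabla\mw^\sigma\|_{L^2}\|\nabla\mbfv\|_{L^2}$. Cancelling one factor of $\|q^\sigma\|_{L^2}$ via the bound on $\mbfv$, squaring, integrating in time, and inserting (1) for $\|\partial_t\mw^\sigma\|_{L^2H^{-1}}$ produces (2). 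The main technical point throughout is uniformity in $\sigma\geq 1$, which is exactly where that hypothesis enters.
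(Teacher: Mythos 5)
Parts (3) and (4) of your argument are correct and coincide with the paper's (a pointwise change of variables using the orthogonality of $M^\sigma$ and $m^\sigma$, plus the chain rule for the $x_3$-derivative), and your part (2) is essentially the paper's proof as well, up to the cosmetic difference that you take $f=q^\sigma/\|q^\sigma\|_{L^2}$ rather than an arbitrary mean-zero $f$ of unit $L^2$ norm in the duality argument.

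The gap is in part (1). You propose to bound $\|\partial_t\mw^\sigma\|_{H^{-1}(D)}$ by testing \eqref{3DNStwistedcart1} only against $\mPhi\in H^1_0(D)^3$ satisfying $\mathrm{div}_y\,\mPhi_H+\frac{2\pi}{\sigma}E\Phi^3=0$, so that the pressure disappears. But the $H^{-1}(D)$ norm is a supremum over \emph{all} of $H^1_0(D)^3$; what you obtain this way is only the norm of $\partial_t\mw^\sigma$ in the dual of the constrained subspace. For an unconstrained test function the pressure term $\langle A^\ast q^\sigma,\mPhi\rangle=\langle q^\sigma,A\mPhi\rangle$ survives and must be controlled by $\|q^\sigma\|_{L^2}$ --- which is exactly what part (2) is supposed to deliver, and your part (2) in turn pairs $\partial_t\mw^\sigma$ against a field $\mbfv$ with $\mathrm{div}_y\,\mbfv=q^\sigma\neq 0$, i.e.\ it genuinely needs the full $H^{-1}(D)$ norm, not the constrained one. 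As written, (1) and (2) are therefore circular. The paper breaks the circle by going up to three dimensions: an arbitrary $\mPsi\in H^1_0(D)$ is lifted to $\mPhi(x)=\sigma^{-1}M^\sigma(x_3)\mPsi(m^\sigma(x_3)x')$ on $\Omega^\sigma$, which yields $\|\partial_t\mw^\sigma\|_{H^{-1}(D)}\le C\sigma^{-1/2}\|\partial_t\mbfu^\sigma\|_{H^{-1}(\Omega^\sigma)}$, and then $\partial_t\mbfu^\sigma=-\mathbb{P}[(\mbfu^\sigma\cdot\nabla)\mbfu^\sigma]+\mathbb{P}[\Delta\mbfu^\sigma]$ is estimated using the three-dimensional Leray projector, which eliminates the pressure with no restriction on the test function; transferring back with the scalings of (3)--(4) and the Ladyzhenskaya inequality gives (1). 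If you insist on staying in two dimensions, you would first need an independent bound on $\|q^\sigma\|_{L^2}$, e.g.\ from the elliptic pressure equation \eqref{qequation}, before the full $H^{-1}(D)$ estimate can be closed.
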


\begin{rem}  \label{EnergyTwistedEqs2}
As a result of Propositions \ref{EnergyTwistedEqsPrime} and \ref{EnergyTwistedEqs} it follows that
\begin{equation} \label{SummaryEnEstTwistedEqs}
\begin{array}{l}
\|\mathbf{w}^{\sigma}(t)\|_{L^2(D)} \leq \|\mathbf{w}_0^{\sigma}\|_{L^2(D)}, \;\;\; \mbox{ for each } t\in [0,T],\\ \\
\|\nabla\mathbf{w}^{\sigma}\|_{L^2((0,T);L^2(D))} \leq C\|\mathbf{w}_0^{\sigma}\|_{L^2(D)}, \\ \\
\|\partial_t\mathbf{w}^{\sigma}\|_{L^2((0,T);H^{-1}(D))} \leq C_1\|\mathbf{w}_0^{\sigma}\|_{L^2(D)}^2 + C_2\|\mathbf{w}_0^{\sigma}\|_{L^2(D)}, \\ \\
\|q^{\sigma}\|_{L^2((0,T);L^2(D))} \leq C_1\|\mathbf{w}_0^{\sigma}\|_{L^2(D)}^2 + C_2\|\mathbf{w}_0^{\sigma}\|_{L^2(D)},
\end{array}
\end{equation}
with constants that are uniform in $\sigma$ on $[1,+\infty)$.
\end{rem}

\begin{proof}
We begin with  estimate \eqref{EnergyTwistedEqs.1}. We recall that $\mathbf{u}^{\sigma}(x) = M^{\sigma}(x_3)\mathbf{w}^{\sigma}(t,m^{\sigma}(x_3) x')$, where $x' = (x_1, x_2)$. 
We exploit the duality between $H^{-1}$ and $H^1_0$ to compute
\[
    \|\pa_t \mw^\sigma\|_{H^{-1}(D)} 
   = \sup_{\boldsymbol{\Psi}\ne \mathbf{0}, \boldsymbol{\Psi}  \in H^1_0(D)} \frac{\langle \boldsymbol{\Psi}, \mw^\sigma\rangle}{\|\boldsymbol{\Psi}\|_{H^1_0}}.
\]
To this end, we test the  symmetry-reduced equations
\eqref{3DNStwistedcart} against a (vector) test function
$\boldsymbol{\Psi} \in H^1_0(D)^3$ and relate the weak form of the
reduced equations to that of the Navier-Stokes equations by
constructing an appropriate test function $\boldsymbol{\Phi}$ in
$H_{0,per}^1(\Omega^{\sigma})^3$ from$\boldsymbol{\Psi}$, as follows:
\[
\boldsymbol{\Phi}(x) \equiv \ds{\frac{1}{\sigma}}M^{\sigma}(x_3)\boldsymbol{\Psi}(m^{\sigma}(x_3) x').
\]
We recall now as well that $(m^\sigma(x_3))^{-1}y=
x'=(x_1,x_2)$  by \eqref{Yofx}.
We then observe that
\[
(M^{\sigma}(x_3))^{T}\boldsymbol{\Phi}((m^{\sigma})^{-1}(x_3) y ,x_3) = \frac{1}{\sigma}\boldsymbol{\Psi}(y) ,
\]
by the orthogonality of $M^\sigma$.

We have that
\[
\int_{D} \boldsymbol{\Psi} (y) \cdot \partial_t\mathbf{w}^{\sigma}(t,y)\,dy 
\]
\[
= \int_D \int_0^{\sigma}  (M^{\sigma}(x_3))^{T}\boldsymbol
{\Phi}((m^{\sigma})^{-1}(x_3) y,x_3) \cdot
\partial_t\mathbf{w}^{\sigma}(t,y)\,dx_3\, dy 
\]
\[
= \int_{\Omega^{\sigma}} \boldsymbol{\Phi}(x_H,x_3) \cdot  M^{\sigma}(x_3)\partial_t\mathbf{w}^{\sigma}(t,m^{\sigma}(x_3) x')\, dx =
\int_{\Omega^{\sigma}} \boldsymbol{\Phi}(x)
 \cdot \partial_t\mathbf{u}^{\sigma}(t,x)\,dx.
\]

To bound the $H^1$ norm of $\mbfu$, we  calculate the derivatives of $\boldsymbol{\Phi}$ to find
\[
\nabla_H\boldsymbol{\Phi}(x) = \frac{1}{\sigma} M^{\sigma}(x_3)\, [(D\Psi)(m^{\sigma}(x_3) x')] \,m^{\sigma}(x_3),
\]
\[
 \begin{aligned}
\partial_{x_3}\boldsymbol{\Phi} (x) &= \frac{2\pi}{\sigma^2}\left( 
\partial_{\rho} M^{\sigma}(x_3) \,\boldsymbol{\Psi}(m^{\sigma}(x_3) x') + 
\right . \\
 &\qquad \left. M^{\sigma}(x_3) \, [(D\boldsymbol{\Psi})(m^{\sigma}
(x_3)\,x_H)]\, [(\partial_{\rho}m^{\sigma})(x_3) x'] \right),
\end{aligned}
\]
where $D$ denotes differentiation of a function with respect to its
variables and $\rho$ denotes the argument of $M^\sigma$ and $m^\sigma$.
A simple change of variables then gives:
\[
\|\boldsymbol{\Phi}\|_{L^2(\Omega^{\sigma})} = \frac{1}{\sqrt{\sigma}}\|\boldsymbol{\Psi}\|_{L^2(D)}.
\]
\[
\|\nabla_H\boldsymbol{\Phi}\|_{L^2(\Omega^{\sigma})} \leq C\frac{1}{\sqrt{\sigma}}\|\nabla \boldsymbol{\Psi}\|_{L^2(D)},
\]
\[
\|\partial_{x_3}\boldsymbol{\Phi}\|_{L^2(\Omega^{\sigma})} \leq C\frac{1}{\sigma^{3/2}}\|\nabla\boldsymbol{\Psi}\|_{L^2(D)},
\]
with $C$  a constant independent of $\sigma$.

Hence, since $\sigma \geq 1$,
\begin{equation} \label{wtintermsut}
\|\partial_t\mathbf{w}^{\sigma}(t,\cdot)\|_{H^{-1}(D)} \leq C\frac{1}{\sqrt{\sigma}}\|\partial_t\mathbf{u}^{\sigma}(t,\cdot)\|_{H^{-1}(\Omega^{\sigma})}.
\end{equation}

Next, we estimate the $H^{-1}$ norm of $\pa_t \mbfu$ directly from  equations \eqref{3DNSper}:
\[
   \partial_t \mathbf{u}^{\sigma} =
   -\mathbb{P}[(\mathbf{u}^{\sigma}\cdot\nabla)\mathbf{u}^{\sigma}]+
   \mathbb{P}[\Delta \mathbf{u}^{\sigma}],
\]
where $\mathbb{P}$ denotes the Leray projector onto divergence-free vector fields tangent to $\partial D$ and periodic in $x_3$ with period
 $\sigma$, so that
\[
   \|\partial_t
   \mathbf{u}^{\sigma}(t,\cdot)\|_{H^{-1}(\Omega^{\sigma})}  \leq
   C_1\|\mbox{div }
 (\mathbf{u}^{\sigma}\otimes \mathbf{u}^{\sigma})(t,\cdot)\|_{H^{-1}
 (\Omega^{\sigma})} + C_2\|\Delta \mathbf{u}^{\sigma}(t,\cdot)\|_{H^{-1} (\Omega^{\sigma})}\]
 \[\leq C_1\|\mathbf{u}^{\sigma}(t,\cdot)\|_{L^4(\Omega^{\sigma})}^2 +
 C_2\|\nabla
 \mathbf{u}^{\sigma}(t,\cdot)\|_{L^2(\Omega^{\sigma})}\]
 \[= C_1\sqrt{\sigma}\|\mathbf{w}^{\sigma}(t,\cdot)\|_{L^4(D)}^2 +
 C_2\frac{1}{\sqrt{\sigma}}
 \|\nabla\mathbf{w}^{\sigma}(t,\cdot)\|_{L^2(D)},
\]
using the helical symmetry expressed by relation \eqref{wdefweak}. It follows from \eqref{wtintermsut} and the estimates above that
\[
\|\partial_t\mathbf{w}^{\sigma}(t,\cdot)\|_{H^{-1}(D)} \leq C\frac{1}{\sqrt{\sigma}}\left(\sqrt{\sigma}\|\mathbf{w}^{\sigma}(t,\cdot)\|_{L^4(D)}^2
+\frac{1}{\sqrt{\sigma}}\|\nabla\mathbf{w}^{\sigma}(t,\cdot)\|_{L^2(D)}\right)\]
\[\leq C\left( \|\mathbf{w}^{\sigma}(t,\cdot)\|_{L^2(D)} \|\nabla\mathbf{w}^{\sigma}(t,\cdot)\|_{L^2(D)}
+ \|\nabla\mathbf{w}^{\sigma}(t,\cdot)\|_{L^2(D)}\right),
\]
where we have used  the two-dimensional Ladyzhenskaya inequality \eqref{ladyineq2D}. This concludes the proof of estimate \eqref{EnergyTwistedEqs.1}.

To prove estimate 
\eqref{EnergyTwistedEqs.2},
we deal directly with the equations for $\mathbf{w}^{\sigma}$, $q^{\sigma}$. Since $p^\sigma$, and hence $q^{\sigma}$, is chosen up to a constant, we  can assume that
\[
\int_D q^{\sigma}(y) \, dy = 0.
\]
We again use duality and interpret 
the $L^2$-norm of $q^\sigma$  as the dual norm in
$(L^2(D))^\ast$. 
Consequently, we pick an arbitrary $f \in L^2(D)$ such that $\int_D f(y)\,dy = 0$
and \ $\|f\|_{L^2}=1$. 
By virtue of Lemma \ref{GaldisLemma} there exists $\mathbf{v}_H \in H^1_0(D)$ such that
\begin{equation} \label{vHtof}
\begin{array}{l}
\mbox{div }\mathbf{v}_H = f,\\ \\
\|\mathbf{v}_H\|_{H^1(D)}\leq C \|f\|_{L^2(D)} =  C.
\end{array}
\end{equation}

We multiply \eqref{3DNStwistedcart} by $\mathbf{v}_H$ and integrate over $D$ to find:
\begin{equation} \label{EnergyIdentTwistedvH}
\begin{gathered}
\ds{\int_D \mathbf{v}_H \cdot \left(
\partial_t \mathbf{w}^{\sigma}_H + (\mathbf{w}_H^{\sigma}\cdot\nabla_y)\mathbf{w}^{\sigma}_H + \ds{\frac{2\pi}{\sigma}}w^{\sigma,3}[E\mathbf{w}^{\sigma}_H - (\mathbf{w}_H^{\sigma})^{\perp}] \right) \,dy} \\ \\
\;\;\;\;\;\;\; \ds{= \int_D \mathbf{v}_H }\cdot \left(- (\nabla_y q^{\sigma})_H + \Delta_y \mathbf{w}^{\sigma}_H + \ds{\frac{4\pi^2}{\sigma^2}} [E^2\mathbf{w}^{\sigma}_H \right.\\ \\
\qquad \qquad\left.- 2 E (\mathbf{w}_H^{\sigma})^{\perp} - \mathbf{w}_H^{\sigma}]\right)\,dy , \\ \\
\end{gathered}
\end{equation}

We next perform several integrations by parts, using the divergence constraint for $\mw^\sigma$:
\[
\mathrm{div}_y \mathbf{w}_H + \ds{\frac{2\pi}{\sigma}} E w^3 = 0,
\]
together with \eqref{vHtof}, to find 
\begin{equation} \label{EnergyIdentTwisted}
\begin{gathered}
\ds{\int_D \mathbf{v}_H \cdot \partial_t \mathbf{w}\,dy  -  \int_D \mathbf{w}^{\sigma}\cdot[(\mathbf{w}_H^{\sigma}\cdot\nabla_y)\mathbf{v}_H ]\,dy}
 \\ \\
\qquad \qquad - \ds{\frac{2\pi}{\sigma}}\int_D \mathbf{w}_H^{\sigma}\cdot w^{3,\sigma} E\mathbf{v}_H + w^{3,\sigma}\mathbf{v}_H \cdot (\mathbf{w}_H^{\sigma})^{\perp}  \,dy \\ \\
\;\;\;\;\;\;\; \ds{ = \int_D f(y)\,q^{\sigma}(y)\,dy -\int_D \nabla_y \mathbf{v}_H\cdot\nabla_y \mathbf{w}^{\sigma}\,dy } -
\ds{\frac{4\pi^2}{\sigma^2}} \ds{\int_D E\mathbf{v}_H\cdot E\mathbf{w}^{\sigma}_H \,dy}\\ \\
\qquad \qquad + \ds{\frac{8\pi^2}{\sigma^2}} \int_D E\mathbf{v}_H \cdot (\mathbf{w}_H^{\sigma})^{\perp}\,dy - \ds{\frac{4\pi^2}{\sigma^2}}\int_D \mathbf{v}_H\cdot \mathbf{w}_H^{\sigma}\,dy . \\ \\
\end{gathered}
\end{equation}
By Poincar\'e's inequality for functions with zero average on $D$, we deduce that
\begin{equation} \label{qEst1}
 \begin{aligned}
  \left| \int_D f \,q^{\sigma}\,dy \right| \leq 
  C\|\mathbf{v}_H\|_{H^1(D)}&(\| 
  \partial_t\mathbf{w}^{\sigma}\|_{H^{-1}(D)} + \\ 
  & \quad \qquad \|\mathbf{w}^{\sigma}\|_{L^4(D)}^2 +
  \|\nabla\mathbf{w}^{\sigma}\|_{L^2(D)}),
 \end{aligned}
\end{equation}
for $C$ a constant independent of $f$ or $\sigma$.
Above we exploit that 
the operator $E=y^\perp \cdot \nabla_y$ is first order and $\sigma \geq 1$.

Hence, using that $\|\mathbf{v}_H\|_{H^1(D)} \leq C\|f\|_{L^2(D)}= C $
from \eqref{vHtof} and  the Ladyzhenskaya inequality again, we find
\begin{equation} \label{qEst2}
\| q^{\sigma}\|_{L^2(D)} \equiv \left| \int_D f \,q^{\sigma}\,dy \right|
 \leq C(\|\partial_t\mathbf{w}^{\sigma}\|_{H^{-1}} + \|\mathbf{w}^{\sigma}\|_{L^2}\|\nabla\mathbf{w}^{\sigma}\|_{L^2} +
\|\nabla\mathbf{w}^{\sigma}\|_{L^2})
\end{equation}

Finally, squaring both sides of the inequality \eqref{qEst2} and using Young's inequality, subsequently integrating in time, we arrive at
\begin{equation} \label{qEstFINAL}
\begin{aligned}
  \| q^{\sigma}&\|_{L^2((0,T);L^2(D))}^2 \leq C(\|\partial_t\mathbf{w}^{\sigma}
\|_{L^2((0,T);H^{-1}(D))}^2 \\ 
& \quad + (\|\mathbf{w}^{\sigma}\|_{L^{\infty}((0,T);L^2(D))}^2 + 1)
\|\nabla\mathbf{w}^{\sigma}
\|_{L^2((0,T);L^2(D))}^2).
\end{aligned}
\end{equation}

Identities \eqref{EnergyTwistedEqs.3} and 
\eqref{EnergyTwistedEqs.4}
 follow by a straightforward change of variables, 
from the relation
\[
\mathbf{u}_0^{\sigma}(x) = M^{\sigma}(x_3)\mathbf{w}_0^{\sigma}(m^{\sigma}(x_3) x'),
\]
which gives by the chain rule, 
\[
\nabla_H \mathbf{u}_0^{\sigma} = M^{\sigma}(x_3)
[(D\mathbf{w}_0^{\sigma})(m^{\sigma}(x_3) x')]
[m^{\sigma}(x_3)],
\]
and
\begin{multline*}
\partial_{x_3}\mathbf{u}_0^{\sigma} =
\frac{2\pi}{\sigma}\left[ \partial_{\rho} M^{\sigma}(x_3) 
\mathbf{w}_0^{\sigma}(m^{\sigma}(x_3) x') \right.\\
\left. +
  M^{\sigma}(x_3)[(D\mathbf{w}_0^{\sigma})(m^{\sigma}(x_3))][\partial_{\rho}m^{\sigma}(x_3)] 
x'\right].
\end{multline*}
\end{proof}

With these estimates at hand, we are now ready to discuss the limit $\sigma \to \infty$. We observe that $\sigma$ is not a parameter appearing explicitly in the Navier-Stokes system 
\eqref{3DNSper}. Therefore it is not clear what the limit equations are even at a formal level. 
The dependence on $\sigma$ is elucidated however in the
symmetry-reduced system \eqref{3DNStwistedcart}, which is equivalent
to the original system at the level of strong solutions 
thanks to Proposition \ref{helicalYvariables}.

For the reduced system \eqref{3DNStwistedcart},  formally setting $\sigma =\infty$ produces the following system of equations for a three-component vector function
 $\mw^\infty: (0,+\infty) \times D\to \RR^3$, with associated pressure $q^\infty$:
\begin{equation} \label{2D3compNS}
\begin{cases}
\partial_t w^{\infty,1}  + (w^{\infty,1}\partial_{y_1}+w^{\infty,2}
\partial_{y_2}) w^{\infty,1} = -\partial_{y_1} q^\infty + (\partial_{y_1}^2 +\partial_{y_2}^2) w^{\infty,1 }, 
\\ 
\partial_t w^{\infty,2}  + (w^{\infty,1} \partial_{y_1}+w^{\infty,2}
\partial_{y_2}) w^{\infty,2}= -\partial_{y_2} q^\infty + (\partial_{y_1}^2 +\partial_{y_2}^2) w^{\infty,2} , 
\\ 
\partial_t w^{\infty,3}  + \left( w^{\infty,1} \partial_{y_1}+w^{\infty,2}
\partial_{y_2}\right) \, w^{\infty,3} =  (\partial_{y_1}^2 +\partial_{y_2}^2) w^{\infty,3}, 
\\
\partial_{y_1} w^{\infty,1}  + \partial_{y_2} w^{\infty,2}  = 0, \qquad  \qquad  \qquad\qquad \quad \;\mbox{ in } \; [0,+\infty) \times D ; \\
\mathbf{w}^\infty = 0, \qquad \qquad \qquad \qquad \qquad \qquad \qquad \qquad  \mbox{ on } \; [0,+\infty) \times \partial D ;\\ 
\mathbf{w}^\infty(0,y) = \mathbf{w^\infty}_0(y), \qquad \qquad \qquad \qquad \qquad \quad y\in D.
\end{cases}
\end{equation}
The initial condition $\mw^\infty_0$ will be taken in $H^1_0(D)$ and assumed to satisfy:
\begin{equation} \label{straightdivfree} 
\partial_{y_1} w^{\infty,1}_0 + \partial_{y_2} w^{\infty,2}_0 = 0.
\end{equation}

The first two momentum equations are independent
of $w^{\infty,3}$ and together with the fourth equation give precisely
the two-dimensional Navier-Stokes equations in $D$, where the fluid
velocity is identified with $\mw^\infty_H:=
(w^{\infty,1},w^{\infty,2},0)$. The third component $w^{\infty,3}$ is
simply advected by the first two and diffused. 
For this reason, we refer to this flow as a planar flow.
Existence and regularity results for the 2D Navier-Stokes equations  immediately give existence and uniqueness of the divergence-free vector field $\mw^\infty_H\in C([0,T);H^1_0(D))\cap L^2((0,T);H^2(D)\cap H^1_0(D))$ and associated pressure $q^\infty\in
L^2((0,T);H^1(D))\cap C^\infty((0,T)\times D)$ for any  initial condition $\mw^\infty_H(0)\in H^1_0(D)$ satisfying  \eqref{straightdivfree}, and any $T>0$. In fact, $\mw_H^\infty$ is smooth for $t>0$.
Consequently, the advection-diffusion equation for $w^{\infty,3}$
admits a unique solution, which belongs to the same class (see
e.g. Proposition 2.7 in \cite{MajdaBertozzi2002} and 
Theorem 3.10 in \cite{T}.) We refer to the three-component vector function 
\[
  \mw^\infty \in C([0,T);H^1_0(D))\cap  
  L^2((0,T);H^2(D)\cap H^1_0(D))\cap 
  C^\infty((0,T)\times D),
\]
as the unique strong solution of  problem \eqref{2D3compNS}.

The  System \eqref{2D3compNS} gives the so-called two-dimensional, three-component
Navier-Stokes equations (also known as the {\em $2\frac{1}{2}$D
  Navier-Stokes equations}, see \cite{MajdaBertozzi2002}.) 
We can uniquely associate to $\mw^\infty$ a solution
$\mbfu^\infty$ of the Navier-Stokes equations in $\Omega$ with initial
data $\mbfu_0^\infty$ by:
\begin{equation} \label{uinftydef}
 \begin{aligned}
    &\mbfu^\infty(t,x)  := \mw^\infty(t, x'), & x'
   \in D, \; t>0, \\
   &\mbfu^\infty_0(x)  := \mw^\infty_0(x'), & x'
   \in D \\
  &p^\infty(t,x)  := q^\infty(t,x',0), &  x'
   \in D, \; t>0,
\end{aligned}
\end{equation}
with $x'=(x_1,x_2)$. It is immediate to see that $\mbfu^\infty$ and $p^\infty$ have at least the same regularity as $\mw^\infty$ and $q^\infty$.
We will refer to $\mbfu^\infty$ as the $2\frac{1}{2}$D solution of the
Navier-Stokes system   \eqref{3DNSper} in $\Omega$
with associated pressure $p^\infty$. 

To obtain a relationship with the original problem 
\eqref{3DNSper}, at least at a formal level, 
we observe that, if we  take the limit $\sigma\to \infty$ in \eqref{uandpinY}, thanks to \eqref{Mofx3} and \eqref{Yofx},  we have the identification:
\begin{equation} \label{eq.sigmalim}
    \mbfu^\infty(t,x) = \mw^\infty(t,x') \equiv \lim_{\sigma\to\infty}     
    \mbfu^\sigma(t,x)
\end{equation}
Above, we have naturally identified the cross section of the cylinder $\Omega$ at height $x_3=0$ with $D$ and $x'=(x_1,x_2)$ with $y$.
We will use the identities and estimates established in Proposition \ref{EnergyTwistedEqs}, valid for all  $1 \leq \sigma < \infty$, to establish an estimate for the difference between $\mathbf{w}^{\sigma}$ and $\mathbf{w}^{\infty}$. 
One difficulty in studying the limit $\sigma \to \infty$ is that $\mw^\infty_H$ is divergence free, while $\mw^\sigma_H$ satisfies a divergence constraint that is $\sigma$ dependent.

\begin{proposition} \label{EnergyEstDiff}
Let $\mw^\infty_0\in H^1_0(D)$ satisfy \eqref{straightdivfree}. Given $\sigma \geq 1$, let 
$\mathbf{w}_0^{\sigma} \in H^1_0(D)$ satisfy \eqref{TwistedDivFreeCond}. 
Let $\mathbf{w}^\infty$ be the unique strong solution of \eqref{2D3compNS} with initial data 
$\mathbf{w}^\infty_0$, and let $\mathbf{w}^{\sigma}$ be the unique
regular solution of  \eqref{3DNStwistedcart} with initial data
$\mathbf{w}_0^{\sigma}$ on the  time interval $(0,T)$, $T>0$.
Set:
\begin{equation} \label{eq.EnergyEstDiff}
  \boldsymbol{\boldsymbol{\Theta}}^{\sigma} \equiv \mathbf{w}^{\sigma} - \mw^\infty.
\end{equation}
Then,  for all $0<t<T$,
\begin{equation} \label{ThetaEnergyEst}
\begin{aligned}
\int_D |\boldsymbol{\boldsymbol{\Theta}}^{\sigma}(t,y)|^2 \, dy &+ \int_0^t\int_D |\nabla \boldsymbol
{\boldsymbol{\Theta}}^{\sigma}(s,y)|^2\,ds\,dy \\
 &\leq C\left(t,\|\mathbf{w}^\infty_0\|_{L^2}^2,\|\mathbf{w}_0^{\sigma}\|_{L^2}^2\right) \left( \|\boldsymbol{\boldsymbol{\Theta}}^{\sigma}_0\|_{L^2}^2 + \frac{1}{\sigma}\right).
\end{aligned}
\end{equation}
\end{proposition}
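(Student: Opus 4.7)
\textbf{Proof proposal for Proposition \ref{EnergyEstDiff}.}

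The plan is to derive an evolution equation for $\boldsymbol{\Theta}^\sigma=\mathbf{w}^\sigma-\mathbf{w}^\infty$ by subtracting \eqref{2D3compNS} from \eqref{3DNStwistedcart1}, test it against $\boldsymbol{\Theta}^\sigma$ in $L^2(D)$, and apply Gr\"onwall. Grouping the $\sigma$-independent 2D Navier--Stokes part against the perturbative tail, the difference satisfies
\begin{equation*}
\partial_t \boldsymbol{\Theta}^\sigma + (\mathbf{w}_H^\sigma\cdot\nabla_y)\boldsymbol{\Theta}^\sigma + (\boldsymbol{\Theta}_H^\sigma\cdot\nabla_y)\mathbf{w}^\infty - \Delta_y \boldsymbol{\Theta}^\sigma + \nabla_y(q^\sigma-q^\infty)_H = \frac{1}{\sigma}\mathcal{R}^\sigma_1 + \frac{1}{\sigma^2}\mathcal{R}^\sigma_2,
\end{equation*}
where $\mathcal{R}^\sigma_1,\mathcal{R}^\sigma_2$ collect, respectively, the first-order and second-order $\sigma$-dependent terms in \eqref{3DNStwistedcart1} (including $Eq^\sigma\mathbf{e}_3$, $w^{\sigma,3}[E\mathbf{w}^\sigma-(\mathbf{w}_H^\sigma)^\perp]$, $E^2\mathbf{w}^\sigma$, $E\mathbf{w}_H^{\sigma,\perp}$ and $\mathbf{w}_H^\sigma$). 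The key structural remark is that $\mathbf{w}^\infty_H$ is divergence-free in the standard 2D sense whereas $\mathbf{w}_H^\sigma$ is not, so $\dive_y \boldsymbol{\Theta}^\sigma_H = -\frac{2\pi}{\sigma}E w^{\sigma,3}$.

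After taking the $L^2(D)$ inner product with $\boldsymbol{\Theta}^\sigma$, the viscous term yields $\|\nabla\boldsymbol{\Theta}^\sigma\|_{L^2}^2$, the transport term $(\mathbf{w}_H^\sigma\cdot\nabla)\boldsymbol{\Theta}^\sigma\cdot\boldsymbol{\Theta}^\sigma$ integrates by parts to $\frac{\pi}{\sigma}\int_D E w^{\sigma,3}\,|\boldsymbol{\Theta}^\sigma|^2\,dy$ (an $O(1/\sigma)$ perturbation), and the stretching term $(\boldsymbol{\Theta}_H^\sigma\cdot\nabla)\mathbf{w}^\infty\cdot\boldsymbol{\Theta}^\sigma$ is handled by the 2D Ladyzhenskaya inequality \eqref{ladyineq2D}:
\begin{equation*}
\Bigl|\int_D (\boldsymbol{\Theta}_H^\sigma\cdot\nabla)\mathbf{w}^\infty\cdot\boldsymbol{\Theta}^\sigma\,dy\Bigr|\leq \tfrac{1}{4}\|\nabla\boldsymbol{\Theta}^\sigma\|_{L^2}^2 + C\,\|\nabla\mathbf{w}^\infty\|_{L^2}^2\|\boldsymbol{\Theta}^\sigma\|_{L^2}^2,
\end{equation*}
producing the Gr\"onwall multiplier $\|\nabla\mathbf{w}^\infty\|_{L^2}^2$, which is integrable in time by the 2D NS energy estimate.

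The main obstacle is the pressure term, precisely because $\boldsymbol{\Theta}_H^\sigma$ fails to be divergence-free. Integrating by parts one gets
\begin{equation*}
-\int_D \nabla_y(q^\sigma-q^\infty)\cdot\boldsymbol{\Theta}_H^\sigma\,dy = \frac{2\pi}{\sigma}\int_D (q^\sigma-q^\infty)\,E w^{\sigma,3}\,dy,
\end{equation*}
which is bounded by $\frac{C}{\sigma}\|q^\sigma-q^\infty\|_{L^2}\|\nabla\mathbf{w}^\sigma\|_{L^2}$ using that $E=y^\perp\cdot\nabla_y$ is first-order with $|y^\perp|\leq 1$ on $D$. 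An analogous integration by parts (exploiting $E^\ast=-E$ and tangency of $y^\perp$ to $\partial D$) converts the $\frac{2\pi}{\sigma}E q^\sigma\,\mathbf{e}_3$ term into $-\frac{2\pi}{\sigma}\int_D q^\sigma E\Theta^{\sigma,3}\,dy$, estimated by $\frac{C}{\sigma}\|q^\sigma\|_{L^2}\|\nabla\boldsymbol{\Theta}^\sigma\|_{L^2}$. All $\mathcal{R}^\sigma_2$ contributions admit similar integration-by-parts treatments, producing $O(1/\sigma^2)$ factors times uniformly bounded products of Sobolev norms of $\mathbf{w}^\sigma$ and $\boldsymbol{\Theta}^\sigma$; a small multiple of $\|\nabla\boldsymbol{\Theta}^\sigma\|_{L^2}^2$ can be absorbed on the left through Young's inequality.

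Finally, integrating in $t$ and invoking the uniform bounds \eqref{SummaryEnEstTwistedEqs} from Remark \ref{EnergyTwistedEqs2} (for $\|\mathbf{w}^\sigma\|_{L^\infty_t L^2}$, $\|\nabla\mathbf{w}^\sigma\|_{L^2_t L^2}$, and $\|q^\sigma\|_{L^2_t L^2}$) together with the 2D Navier--Stokes bounds on $\mathbf{w}^\infty$ and the corresponding control on $q^\infty$, the perturbation contributions integrate to a total of size $C/\sigma$. Gr\"onwall's inequality applied to
\begin{equation*}
\frac{d}{dt}\|\boldsymbol{\Theta}^\sigma\|_{L^2}^2 + \|\nabla\boldsymbol{\Theta}^\sigma\|_{L^2}^2 \leq C\bigl(\|\nabla\mathbf{w}^\infty\|_{L^2}^2 + \tfrac{1}{\sigma}\|E w^{\sigma,3}\|_{L^2}\bigr)\|\boldsymbol{\Theta}^\sigma\|_{L^2}^2 + g^\sigma(t),
\end{equation*}
with $\int_0^t g^\sigma\leq C(t,\|\mathbf{w}^\infty_0\|_{L^2},\|\mathbf{w}_0^\sigma\|_{L^2})/\sigma$, then delivers \eqref{ThetaEnergyEst}. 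I expect the delicate bookkeeping of the pressure defect and of the many $E$-operator terms through integration by parts to be the technical core; once each piece is absorbed either into $\|\nabla\boldsymbol{\Theta}^\sigma\|_{L^2}^2$ or into a $C/\sigma$ forcing, the rest is standard Gr\"onwall.
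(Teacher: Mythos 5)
Your proposal follows essentially the same route as the paper: subtract the two systems, test the difference equation against $\boldsymbol{\Theta}^\sigma$, integrate by parts using the $\sigma$-dependent divergence constraint (in particular converting the pressure-gradient term into $\frac{2\pi}{\sigma}\int (q^\infty-q^\sigma)\,E w^{\sigma,3}$ and the $Eq^\sigma\mathbf{e}_3$ term likewise), control everything with the 2D Ladyzhenskaya inequality and the uniform-in-$\sigma$ bounds of Remark \ref{EnergyTwistedEqs2}, and close with Gr\"onwall. The only imprecision is in your final displayed differential inequality, where the transport defect $\frac{\pi}{\sigma}\int_D Ew^{\sigma,3}|\boldsymbol{\Theta}^\sigma|^2\,dy$ cannot be bounded by $\frac{C}{\sigma}\|Ew^{\sigma,3}\|_{L^2}\|\boldsymbol{\Theta}^\sigma\|_{L^2}^2$ by H\"older alone; one needs $\|\boldsymbol{\Theta}^\sigma\|_{L^4}^2$ followed by Ladyzhenskaya and Young, which yields the multiplier $\frac{C}{\sigma}\|\nabla\mathbf{w}^\sigma\|_{L^2}^2$ plus an absorbable piece of $\|\nabla\boldsymbol{\Theta}^\sigma\|_{L^2}^2$ --- exactly the paper's estimate of its term $I_1$, and consistent with the strategy you describe elsewhere in the proposal.
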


\begin{proof}
Since $\mw^\sigma$ is a strong solution of \eqref{3DNStwistedcart} and
$\mw^\infty$ is of 
\eqref{2D3compNS}  on the interval $[0,T)$, there exist functions $q^\sigma$ and 
$q^\infty \in L^1((0,T);H^1(D))$
enforcing the divergence  constraints. If we set 
\ $r^\sigma=q^\infty -q^\sigma$, then $\boldsymbol{\Theta}^\sigma$
satisfies 
 the following set of equations on $(0,T)\times D$:
\begin{equation} \label{ThetaEqs}
\left\{
\begin{array}{ll}
\partial_t \boldsymbol{\Theta}^{\sigma} + (\mathbf{w}^{\sigma}_H\cdot\nabla_y)\boldsymbol{\Theta}^{\sigma} + (\boldsymbol{\Theta}^{\sigma}_H\cdot\nabla_y)\mathbf{w}^\infty +
\ds{\frac{2\pi}{\sigma}}w^{\sigma,3}[E\mathbf{w}^{\sigma} - (\mathbf{w}^{\sigma}_H)^{\perp}] \\ \\
\;\;\;\;\;\;\;= -(\nabla_y r^{\sigma})_H -\ds{\frac{2\pi}{\sigma}}Eq^{\sigma}\mathbf{e_3}
+ \Delta_y \boldsymbol{\Theta}^{\sigma} + \ds{\frac{4\pi^2}{\sigma^2}} [E^2\mathbf{w}^{\sigma}
- 2 E (\mathbf{w}^{\sigma}_H)^{\perp} - \mathbf{w}^{\sigma}_H], \\ \\
\mathrm{div}_y \boldsymbol{\Theta}^{\sigma}_H + \ds{\frac{2\pi}{\sigma}} E w^{\sigma,3} = 0,
\end{array}
\right.
\end{equation}
where $E$ is again the differential operator 
\ $y^\perp \cdot \nabla_y$ defined in \eqref{E1}.
These equations are complemented by the initial condition 
\[
     \boldsymbol{\Theta}^\sigma_0:=
    \boldsymbol{\Theta}^\sigma(0) = 
    \mw^\sigma_0 - \mw^\infty_0 \in H^1_0(D)
\]
and no-slip boundary conditions on $\pa D$.

We observe that $\boldsymbol{\Theta}^\sigma$ has enough regularity to be a test function for the weak formulation of \eqref{ThetaEqs}. In particular, $\pa_t \boldsymbol{\Theta}^\sigma\in L^2((0,T),L^2(D))$.
The weak form, after rearranging the terms and integrating by parts, gives:
\begin{align} \label{ThetaDifflIdent}
&\ds{\frac{1}{2}\frac{d}{dt}\int_D |\boldsymbol{\Theta}^{\sigma}|^2\,dy + \int_D |\nabla\boldsymbol{\Theta}^{\sigma}|^2\,dy
= -\int_D \boldsymbol{\Theta}^{\sigma}\cdot \left[ (\mathbf{w}^{\sigma}_H\cdot\nabla_y)\boldsymbol{\Theta}^{\sigma}\right]\,dy} \nonumber \\[2mm] 
&\ds{- \int_D \boldsymbol{\Theta}^{\sigma}\cdot \left[ (\boldsymbol{\Theta}^{\sigma}_H\cdot\nabla_y)\mathbf{w}^\infty \right]\,dy
-\frac{2\pi}{\sigma} \int_D \boldsymbol{\Theta}^{\sigma}\cdot \left[ w^{\sigma,3}(E\mathbf{w}^{\sigma} - (\mathbf{w}^{\sigma}_H)^{\perp}) \right]\,dy}\nonumber\\[2mm] 
&\ds{+\frac{4\pi^2}{\sigma^2} \int_D \boldsymbol{\Theta}^{\sigma}\cdot \left[ E^2\mathbf{w}^{\sigma}
- 2 E (\mathbf{w}^{\sigma}_H)^{\perp} - \mathbf{w}^{\sigma}_H \right]\,dy} \\[2mm] 
&\ds{-\int_D \boldsymbol{\Theta}^{\sigma}\cdot \left[ (\nabla_y r^{\sigma})_H + \frac{2\pi}{\sigma}Eq^{\sigma}\mathbf{e_3}  \right]\,dy \equiv -I_1-I_2
-I_3+I_4-I_5.}\nonumber
\end{align}

We estimate each of the five integrals on the right-hand side. 
Since $\mw^\sigma$ is a strong  solution and in view of estimates 
\eqref{SummaryEnEstTwistedEqs} for $\mw^\sigma$, all norms appearing
in the bounds to 
follow are finite and all constants $C$ are uniform in $\sigma \in [1,+\infty)$.
We have:
\[
 \begin{aligned}
   & |I_1| \leq \frac{2\pi}{\sigma}\int_D \frac{1}{2}
|\boldsymbol{\Theta}^{\sigma}|^2|\nabla \mathbf{w}^{\sigma}|\,dy, \\
   & |I_2| \leq \int_D |\boldsymbol{\Theta}^{\sigma}|^2|\nabla
   \mathbf{w}^\infty|\,dy, \\
   &|I_3| \leq \frac{2\pi}{\sigma}\int_D
   |\boldsymbol{\Theta}^{\sigma}||\mathbf{w}^{\sigma}|(|\nabla\mathbf{w}^{\sigma}|
   + |\mathbf{w}^{\sigma}|)\,dy, \\
    &|I_4| \leq \frac{4\pi^2}{\sigma^2} \left[\int_D
      |\nabla\boldsymbol{\Theta}^{\sigma}|
      |\nabla\mathbf{w}^{\sigma}|\,dy + \int_D 
      |\boldsymbol{\Theta}^{\sigma}|(|\nabla\mathbf{w}^{\sigma}| + 
   |\mathbf{w}^{\sigma}|)\,dy \right], \\
   &|I_5| = \left| \frac{2\pi}{\sigma} \int_D [q^\infty\, \,E(w^{\sigma,3}) - q^{\sigma}\,E(w^{\infty,3})]\,dy \right|\leq \frac{2\pi}{\sigma}\int_D
( |\nabla\mathbf{w}^{\sigma}|\,|q^\infty| + |\nabla \mathbf{w^\infty}|
\,|q^{\sigma}|)\,dy.
\end{aligned}
\]

We bound further each integral $I_i$, $i=1,\ldots,5$, using repeatedly the Ladyzhenskaya inequality
\eqref{ladyineq2D}, Cauchy-Schwartz and Young's inequalities:
\begin{align} 
 &|I_1| \leq
\frac{C}{\sigma} \|\boldsymbol{\Theta}^{\sigma}\|_{L^4(D)}^2\|\nabla \mathbf{w}^{\sigma}\|_{L^2(D)}
\leq
\frac{C}{\sigma}
(\|\boldsymbol{\Theta}^{\sigma}\|_{L^2(D)}^2\|\nabla
\mathbf{w}^{\sigma}\|_{L^2(D)}^2 +
\|\nabla\boldsymbol{\Theta}^{\sigma}\|_{L^2(D)}^2), \label{I1}\\ 
 &|I_2| \leq
\|\boldsymbol{\Theta}^{\sigma}\|_{L^4(D)}^2\|\nabla \mathbf{w}^\infty\|_{L^2(D)}
\leq
\frac{1}{2} \|\boldsymbol{\Theta}^{\sigma}\|_{L^2(D)}^2\|\nabla
\mathbf{w}^\infty\|_{L^2(D)}^2 + \frac{1}{2}
\|\nabla\boldsymbol{\Theta}^{\sigma}\|_{L^2(D)}^2, \label{I2} 
\end{align}
\begin{align}
&|I_3| \leq
\ds{\frac{C}{\sigma}(\|\boldsymbol{\Theta}^{\sigma}\|_{L^2(D)}\|\mathbf{w}^{\sigma}\|_{L^4(D)}^2 +  \|\boldsymbol{\Theta}^{\sigma}\|_{L^4(D)} \|\nabla\mathbf{w}^{\sigma}\|_{L^2(D)}
\|\mathbf{w}^{\sigma}\|_{L^4(D)})}  \nonumber \\
&\leq
\ds{\frac{C}{\sigma}(\|\boldsymbol{\Theta}^{\sigma}\|_{L^2(D)}^2 \|\mathbf{w}^{\sigma}\|_{L^2(D)}^2
+ \|\nabla \mathbf{w}^{\sigma}\|_{L^2(D)}^2 + \|\boldsymbol{\Theta}^{\sigma}\|_{L^2(D)}^{1/2} \|\nabla \boldsymbol{\Theta}^{\sigma}\|_{L^2(D)}^{1/2} 
\|\nabla\mathbf{w}^{\sigma}\|_{L^2(D)}^{3/2}\|\mathbf{w}^{\sigma}\|_{L^2(D)}^{1/2})}
\nonumber \\ 
&\leq
\ds{\frac{C}{\sigma}(\|\boldsymbol{\Theta}^{\sigma}\|_{L^2(D)}^2 \|\mathbf{w}^{\sigma}\|_{L^2(D)}^2
+ \|\nabla \mathbf{w}^{\sigma}\|_{L^2(D)}^2 + \|\boldsymbol{\Theta}^{\sigma}\|_{L^2(D)}^2 \|\nabla \boldsymbol{\Theta}^{\sigma}\|_{L^2(D)}^2 \|\mathbf{w}^{\sigma}\|_{L^2(D)}^2
 + \|\nabla\mathbf{w}^{\sigma}\|_{L^2(D)}^2)} \nonumber\\ 
&\leq
\ds{\frac{C}{\sigma}}[\|\boldsymbol{\Theta}^{\sigma}\|_{L^2(D)}^2
\|\mathbf{w}^{\sigma}\|_{L^2(D)}^2 + \|\nabla
\mathbf{w}^{\sigma}\|_{L^2(D)}^2 \label{I3} \\ 
&\quad \qquad + (\|\mathbf{w}^{\sigma}\|_{L^2(D)}^2 + \|\mathbf{u}\|_{L^2(D)}^2) \|\nabla \boldsymbol{\Theta}^{\sigma}\|_{L^2(D)}^2 \|\mathbf{w}^{\sigma}\|_{L^2(D)}^2
+ \|\nabla\mathbf{w}^{\sigma}\|_{L^2(D)}^2],  \nonumber
\end{align} 
\begin{align}
&|I_4| \leq  \frac{C}{\sigma^2} 
( \|\nabla\boldsymbol{\Theta}^{\sigma}\|_{L^2}\|\nabla\mathbf{w}^{\sigma}\|_{L^2(D)}  +   \|\boldsymbol{\Theta}^{\sigma}\|_{L^2(D)}\|\nabla\mathbf{w}^{\sigma}\|_{L^2(D)} 
+
\|\boldsymbol{\Theta}^{\sigma}\|_{L^2(D)}\|\mathbf{w}^{\sigma}\|_{L^2(D)}) \nonumber
 \\
&\qquad \leq
\frac{C}{\sigma^2} ( \|\nabla\boldsymbol{\Theta}^{\sigma}\|_{L^2(D)}^2
+ \|\nabla\mathbf{w}^{\sigma}\|_{L^2(D)}^2  +   \|\boldsymbol{\Theta}^{\sigma}\|_{L^2(D)}^2 
+  \|\mathbf{w}^{\sigma}\|_{L^2(D)}^2), \label{I4}
\end{align}
\begin{align} 
&|I_5|
\leq \frac{C}{\sigma}
  (\|q^\infty\|_{L^2(D)}\|\nabla\mathbf{w}^{\sigma}\|_{L^2(D)} +
  \|q^{\sigma}\|_{L^2(D)}\|\nabla \mathbf{w}^\infty\|_{L^2}(D))
  \nonumber  \\
&\qquad \leq \frac{C}{\sigma} (\|q^\infty\|_{L^2(D)}^2 +
  \|\nabla\mathbf{w}^{\sigma}\|_{L^2(D)}^2 + \|q^{\sigma}\|_{L^2(D)}^2
  + \|\nabla \mathbf{w}^\infty\|_{L^2(D)}^2).   \label{I5} 
\end{align}

Inserting estimates \eqref{I1} --- \eqref{I5} into identity \eqref{ThetaDifflIdent} yields: 
\begin{align}
&\frac{d}{dt}\|\boldsymbol{\Theta}^{\sigma}\|_{L^2(D)}^2 + \|\nabla\boldsymbol{\Theta}^{\sigma}\|_{L^2(D)}^2
\leq \|\nabla\boldsymbol{\Theta}^{\sigma}
\|_{L^2(D)}^2 \cdot \nonumber \\
&\qquad \cdot \ds{\left(
\frac{C}{\sigma} + \frac{1}{2} + \frac{C}{\sigma}\|\mathbf{w}^{\sigma}\|_{L^2(D)}^4 + \frac{C}{\sigma}\|\mathbf{w}^{\sigma}\|_{L^2(D)}^2\|\mathbf{w}^\infty\|_{L^2(D)}^2
+ \frac{C}{\sigma^2}
\right) + \|\boldsymbol{\Theta}^{\sigma}\|_{L^2(D)}^2} \cdot
\nonumber \\
&\quad\ds{\cdot 
\left(
\frac{C}{\sigma} \|\nabla\mathbf{w}^{\sigma}\|_{L^2(D)}^2 + \frac{C}{\sigma}\|\nabla \mathbf{w}^\infty\|_{L^2(D)}^2 + \frac{C}{\sigma} \|\mathbf{w}^{\sigma}\|_{L^2(D)}^2 +
\frac{1}{2} \|\nabla \mw^\infty\|^2_{L^2(D)} +
\frac{C}{\sigma^2}
\right)}  \nonumber \\
&\qquad \ds{+ \left(
\frac{C}{\sigma} \|\nabla\mathbf{w}^{\sigma}\|_{L^2(D)}^2 + \frac{C}{\sigma^2} 
\|\nabla\mathbf{w}^{\sigma}\|_{L^2(D)}^2
+ \frac{C}{\sigma^2} \|\mathbf{w}^{\sigma}\|_{L^2(D)}^2 + \frac{C}{\sigma} \|\nabla \mathbf{w}^\infty\|_{L^2(D)}^2
\right.
}  \nonumber \\
& \qquad \qquad \qquad \qquad \qquad  \qquad\ds{
+ \left. \frac{C}{\sigma} \|q^\infty\|_{L^2(D)}^2 +
\frac{C}{\sigma}\|q^{\sigma}\|_{L^2(D)}^2
\right).}  \label{DiffIneqTheta}
\end{align}

Thanks again to the regularity of $\mw^\infty$, i.e.,
$$
\mw^\infty \in C([0,T);H^1_0(D))\cap L^2((0,T);H^2(D)\cap H^1_0(D)),
$$ and 
estimates \eqref{SummaryEnEstTwistedEqs} for $\mw^\sigma$, we can now choose $\sigma$ large enough such that
\[
\frac{C}{\sigma} + \frac{1}{2} + \frac{C}{\sigma}\|\mathbf{w}^{\sigma}\|_{L^2(D)}^4 + \frac{C}{\sigma}\|\mathbf{w}^{\sigma}\|_{L^2(D)}^2\|\mathbf{w}^\infty\|_{L^2(D)}^2
 + \frac{C}{\sigma}^2 < \frac{3}{4}.
\]
We will rewrite \eqref{DiffIneqTheta} as a differential inequality in
order to apply  Gr\"onwall's Lemma. 
To this end, we introduce the functions
\[
 f(t) = \frac{C}{\sigma} \|\nabla\mathbf{w}^{\sigma}\|_{L^2}^2 + 
\left(\frac{C}{\sigma}+\frac{1}{2}\right)\|\nabla \mathbf{w}^\sigma\|_{L^2}^2 + \frac{C}{\sigma} \|\mathbf{w}^{\sigma}\|_{L^2}^2 +
\frac{C}{\sigma^2} 
\] 
and
\[
g(t) = \frac{C}{\sigma} \|\nabla\mathbf{w}^{\sigma}(t)\|_{L^2}^2 + \frac{C}{\sigma^2} \|\nabla\mathbf{w}^{\sigma}
(t)\|_{L^2}^2
+ \frac{C}{\sigma^2} \|\mathbf{w}^{\sigma}(t)\|_{L^2}^2 \qquad \qquad \]
\[
\qquad \qquad \qquad + \frac{C}{\sigma} \|\nabla \mathbf{w}^\infty(t)\|_{L^2}^2
+ \frac{C}{\sigma} \|q^\infty(t)\|_{L^2}^2 + \frac{C}{\sigma}\|q^{\sigma}(t)\|_{L^2}^2.
\]
We also set
\[
z(t) = \|\boldsymbol{\Theta}^{\sigma}(t)\|_{L^2}^2.
\]
With this notation the differential inequality above becomes
\[
\frac{d}{dt}z \leq f(t)z + g(t),
\]
so that, by Gr\"onwall's Lemma we conclude that
\begin{equation} \label{zEstimate}
z(t) \leq \exp\left\{\int_0^t f(s)\,ds\right\} \, z(0) + \int_0^t \exp\left\{\int_s^t f(\tau)\,d\tau\right\} \, g(s) \, ds.
\end{equation}

Next, standard energy estimates for the 2D Navier-Stokes equations
along with similar energy estimates for advection-diffusion equations,
using that $\mw^\infty_H$ is divergence-free, 
give:
\[
   \int_0^t \|\nabla \mw^\infty(s)\|_{L^2(D)}^2\,ds
   \leq  \|\mw^\infty_0\|_{L^2(D)}^2.
\]
We employ once again the estimates 
\eqref{SummaryEnEstTwistedEqs}  
in Remark \ref{EnergyTwistedEqs2}
to  deduce that
\[\int_0^t f(s) \, ds \leq C\,
\left(\|\mathbf{w}^\infty_0\|_{L^2}^2 + 
\frac{1}{\sigma}
\|\mathbf{w}_0^{\sigma}\|_{L^2}^2\right),
\]
\[
\int_0^t g(s) \, ds \leq \frac{C}{\sigma}\left(\|\mathbf{w}^\infty_0\|_{L^2}^2 + 
\|\mathbf{w}_0^{\sigma}\|_{L^2}^2\right).
\]

Hence  we arrive at the estimate, 
using that $\sigma\geq 1$,
\[
\|\boldsymbol{\Theta}^{\sigma}\|_{L^2}^2 \leq C\left(\|\mathbf{w}^\infty_0\|_{L^2}^2,\|\mathbf{w}_0^{\sigma}\|_{L^2}^2\right) \|\boldsymbol{\Theta}^{\sigma}_0\|_{L^2}^2 + \frac{C}{\sigma}\left(\|\mathbf{w}^\infty_0\|_{L^2}^2 + \|\mathbf{w}_0^{\sigma}\|_{L^2}^2\right).
\]

This estimate, together with the choice of $\sigma$, produces, upon integrating the differential inequality \eqref{DiffIneqTheta} in time, the desired result.
\end{proof}

Before formulating our main results concerning the limit $\sigma \to \infty$ , we note a consequence of Proposition \ref{EnergyEstDiff}; namely, there may be more than one $2\frac{1}{2}$D flow within a certain distance to a given helical flow $\mw^\sigma$.
This non-uniqueness will be apparent later, since a correction to the initial data $\mw^\sigma_0$ will be needed to enforce the divergence-free condition for $\mw^\infty_0$.

We start with a simpler result,
describing a way in which solutions of the two-dimensional, three-component Navier-Stokes
equations can be approximated by suitable helical solutions of the three-dimensional
Navier-Stokes equations.
More precisely, suppose we are initially given a vector function
\[
 \mathbf{w}^\infty_0 = (w_0^{\infty,1},w_0^{\infty,2},w_0^{\infty,3}) \in H^1_0 (D)
\]
that satisfies the divergence-free constraint
\eqref{straightdivfree}. Let $\mw^\infty$ be the unique strong solution of \eqref{2D3compNS} with pressure $q^\infty$. 
Recall that we can uniquely associate to $\mw^\infty$ a solution
$\mbfu^\infty$ of the Navier-Stokes equations in $\Omega$ with initial
data $\mbfu_0^\infty$ via \eqref{uinftydef}.
We will construct a $\sigma$-dependent correction to
$\mathbf{w}^\infty_0$, $\mathbf{v}_0^{\sigma}$, using Lemma
\ref{GaldisLemma}, so that the resulting field $\mathbf{w}^\sigma_0$,
given in \eqref{w0sigma} below, 
 satisfies \eqref{TwistedDivFreeCond} and, hence, can be taken as initial data for the reduced helical equations \eqref{3DNStwistedcart}.

We first observe that, since $\mw^\infty_0\vert_{\pa D}\equiv 0$ and $\dive_y y^\perp\equiv 0$, 
\[
\int_D  y^\perp \cdot \nabla_y w_0^{\infty,3} \,dy = \int_D \mbox{ div}_y \, (y^{\perp}\,w_0^{\infty,3})\,dy = 0.
\]
Therefore, by Lemma \ref{GaldisLemma}, there exists a solution \  $\mathbf{v}_0^{\sigma}=(v_0^{\sigma,1},v_0^{\sigma,2}) \in H^{1}_0(D)$
to the problem
\begin{equation} \label{correctionu}
\mbox{div}_y \, \mathbf{v}_0^{\sigma} = -\frac{2\pi}{\sigma} E\,w_0^{\infty,3},
\end{equation}
such that
\begin{equation} \label{correctionuestimate}
\|\mathbf{v}_0^{\sigma}\|_{H^1} \leq C\frac{1}{\sigma}\|E \, w^{\infty,3}_0\|_{L^2(D)}\leq \frac{C}{\sigma}\|\mathbf{w}^\infty_0\|_{H^1(D)},
\end{equation}
where we recall that $E=y^\perp\cdot \nabla_y$.

Next, we introduce the three-component vector function 
\begin{equation} \label{w0sigma}
\mathbf{w}_0^{\sigma} = \mathbf{w}^\infty_0 + (\mathbf{v}_0^{\sigma},0) \in H^1_0(D),
\end{equation}
which by construction satisfies \eqref{TwistedDivFreeCond}, since
$w^{\sigma,3} = w^{\infty,3}$. 
We will take $\mw^\sigma_0$ so constructed as  initial data  for \eqref{3DNStwistedcart}. 
We are now ready to state our first theorem.

\begin{theorem} \label{MainThm1}
Fix $\sigma \geq 1$.
Let $\mw^\infty_0\in H^1_0(D)$ satisfy \eqref{straightdivfree}. Let
$\mw^\infty$ be the unique  strong solution of \eqref{2D3compNS} with initial data  
$\mw^\infty_0$. Let $\mbfu^\infty$ be the unique $2\frac{1}{2}$D
solution of the 
Navier-Stokes equations \eqref{3DNSper} associated to $\mw^\infty$ via \eqref{uinftydef}. 
Let $\mw^\sigma_0$ be given by \eqref{w0sigma} for a choice of
$\mathbf{v}^\sigma_0$ solution of \eqref{correctionu}, and denote by
$\mw^\sigma$ the strong  solution of \eqref{3DNStwistedcart} with initial data 
$\mw^\sigma_0$. Let $\mathbf{u}_0^{\sigma}$ be the associated strong helical solution of the 
Navier-Stokes equations \eqref{3DNSper} given by Proposition \ref{helicalYvariables}. Then, 
for any fixed $T>0$, 
\begin{equation} \label{eq.MainThm1.1}
\begin{aligned}
 \|\mathbf{u}^{\sigma}(t,\cdot,x_3=0) - \mathbf{u}^\infty(t,\cdot,x_3=0)\|_{L^2(D)} &\leq \frac{C}{\sqrt{\sigma}}, \qquad \text{ for all } \; 0<t<T, \\
\|\nabla_H\mathbf{u}^{\sigma}\vert_{x_3=0} - \nabla_H\mathbf{u}^\infty\vert_{x_3=0}\|_{L^2(0,T;L^2(D))} &\leq \frac{C}{\sqrt{\sigma}},
\end{aligned}
\end{equation}
where $C$ is independent of $\sigma \in [1,\infty)$.
\end{theorem}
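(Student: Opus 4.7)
The plan is to reduce the two claimed inequalities directly to the energy estimate of Proposition \ref{EnergyEstDiff} applied to $\boldsymbol{\Theta}^\sigma = \mw^\sigma - \mw^\infty$, and then to check that the constructed initial datum $\mw_0^\sigma$ is $1/\sigma$--close (in $H^1$) to $\mw_0^\infty$ so that the right-hand side of \eqref{ThetaEnergyEst} is genuinely of order $1/\sigma$.

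The first step is a very clean identification on the slice $\{x_3=0\}$. Since $M^\sigma(0) = I$ and $m^\sigma(0) = I$, the representation \eqref{wdefweak} and the definition \eqref{uinftydef} give
\begin{equation*}
\mbfu^\sigma(t,x',0) = \mw^\sigma(t,x'), \qquad \mbfu^\infty(t,x',0) = \mw^\infty(t,x'),
\end{equation*}
so $\mbfu^\sigma|_{x_3=0} - \mbfu^\infty|_{x_3=0} = \boldsymbol{\Theta}^\sigma$. Differentiating $\mbfu^\sigma(x) = M^\sigma(x_3)\mw^\sigma(m^\sigma(x_3)x')$ in $x_1,x_2$ and evaluating at $x_3=0$, where the two orthogonal matrices reduce to the identity, gives $\nabla_H \mbfu^\sigma|_{x_3=0} = \nabla \mw^\sigma$, and identically for $\mbfu^\infty$. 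Hence $\nabla_H(\mbfu^\sigma - \mbfu^\infty)|_{x_3=0} = \nabla \boldsymbol{\Theta}^\sigma$, and both norms in \eqref{eq.MainThm1.1} become, respectively, $\|\boldsymbol{\Theta}^\sigma(t)\|_{L^2(D)}$ and $\|\nabla \boldsymbol{\Theta}^\sigma\|_{L^2((0,T);L^2(D))}$.

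The second step is to control the initial discrepancy. By construction \eqref{w0sigma}, $\boldsymbol{\Theta}_0^\sigma = \mw_0^\sigma - \mw_0^\infty = (\mbfv_0^\sigma,0)$, and the corrector estimate \eqref{correctionuestimate} yields
\begin{equation*}
\|\boldsymbol{\Theta}_0^\sigma\|_{L^2(D)} \leq \|\mbfv_0^\sigma\|_{H^1(D)} \leq \frac{C}{\sigma}\|\mw_0^\infty\|_{H^1(D)}.
\end{equation*}
In particular, since $\sigma \geq 1$, we have $\|\boldsymbol{\Theta}_0^\sigma\|_{L^2}^2 \leq C/\sigma^2 \leq C/\sigma$, and also $\|\mw_0^\sigma\|_{L^2(D)} \leq \|\mw_0^\infty\|_{H^1(D)} + C$, uniformly in $\sigma \geq 1$. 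Plugging these into Proposition \ref{EnergyEstDiff} gives
\begin{equation*}
\|\boldsymbol{\Theta}^\sigma(t)\|_{L^2(D)}^2 + \int_0^t \|\nabla\boldsymbol{\Theta}^\sigma(s)\|_{L^2(D)}^2 \, ds \leq C\bigl(T,\|\mw_0^\infty\|_{H^1}\bigr)\Bigl(\|\boldsymbol{\Theta}_0^\sigma\|_{L^2}^2 + \tfrac{1}{\sigma}\Bigr) \leq \frac{C}{\sigma},
\end{equation*}
with a constant independent of $\sigma\in[1,\infty)$. Taking square roots and using the identification from the first step delivers both inequalities in \eqref{eq.MainThm1.1}.

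There is no real analytic obstacle here, as Proposition \ref{EnergyEstDiff} has already done the heavy lifting; the only point that requires care is the bookkeeping needed to verify that the constant in \eqref{ThetaEnergyEst} remains uniform in $\sigma$. This reduces to showing that $\|\mw_0^\sigma\|_{L^2}$ stays bounded as $\sigma \to \infty$, which is immediate from the $1/\sigma$ decay of the corrector $\mbfv_0^\sigma$ in $H^1$. One should also note that the result depends on the \emph{specific} choice of $\mw_0^\sigma$ provided by Lemma \ref{GaldisLemma}: as the authors already remarked after the proof of Proposition \ref{EnergyEstDiff}, there is no uniqueness for this approximating initial datum, which is precisely why the theorem is phrased as a statement about \emph{some} helical solution approaching $\mbfu^\infty$ rather than a convergence statement for an arbitrary family.
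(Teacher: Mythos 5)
Your proposal is correct and follows essentially the same route as the paper's proof: identify $\mbfu^\sigma$ and $\mbfu^\infty$ with $\mw^\sigma$ and $\mw^\infty$ on the slice $x_3=0$ (where $M^\sigma$ and $m^\sigma$ are the identity), bound $\|\boldsymbol{\Theta}_0^\sigma\|_{L^2}=\|\mathbf{v}_0^\sigma\|_{L^2}\le C/\sigma$ via \eqref{correctionuestimate} together with the uniform bound on $\|\mw_0^\sigma\|_{L^2}$, and invoke Proposition \ref{EnergyEstDiff}. The only difference is cosmetic: you make the gradient identification $\nabla_H\mbfu^\sigma\vert_{x_3=0}=\nabla\mw^\sigma$ slightly more explicit than the paper does, which is a welcome clarification rather than a deviation.
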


\begin{proof}
Since by hypothesis, both $\mbfu^\sigma$ and $\mbfu^\infty \in
C([0,T);H^1_{0,per}(\Omega^\sigma)$ \\
$ \cap L^2((0,T);H^2_{per}(\Omega^\sigma)\cap
H^1_{0,per}(\Omega^\sigma))$, 
the traces \ $\mbfu^\sigma\vert_{x_3=0}(t)$ and $\mbfu^\infty\vert_{x_3=0}(t)$ 
are well defined as elements of $L^2(D)$ for all $0\leq t<T$, while the traces $\nabla \mbfu^\sigma\vert_{x_3=0}$ and  
$\nabla \mbfu^\infty\vert_{x_3=0}$ are well defined as elements of $L^2((0,T);L^2(D))$.

We continue by showing that
\begin{equation} \label{enough1}
\|\mathbf{w}_0^{\sigma}- \mathbf{w}^\infty_0\|_{L^2(D)} \leq \frac{C}{\sqrt{\sigma}} \;\;\;\mbox{ and}
\end{equation}
\begin{equation} \label{enough2}
\|\mathbf{w}_0^{\sigma}\|_{L^2(D)} \leq C,
\end{equation}
with constants $C$ uniform in $\sigma \in [1,\infty)$.
To see that the first statement \eqref{enough1} holds true, we note that
\[\mathbf{w}_0^{\sigma}- \mathbf{w}^\infty_0 = (\mathbf{v}_0^{\sigma},0),\]
where $\mathbf{v}_0^{\sigma}$ is a solution of  \eqref{correctionu} and satisfies \eqref{correctionuestimate}.

Hence,
\[
\|\mathbf{w}_0^{\sigma}- \mathbf{w}^\infty_0\|_{L^2(D)} = \|\mathbf{v}_0^{\sigma}\|_{L^2(D)} \leq
\|\mathbf{v}_0^{\sigma}\|_{H^1(D)} \leq  \frac{C}{\sigma}\|\mathbf{w}^\infty_0\|_{H^1(D)}.
\]
The second statement \eqref{enough2} follows immediately from the first.

Then, Proposition \ref{EnergyEstDiff} gives that 
\begin{equation} \label{equivw} 
\|\mathbf{w}^{\sigma}(t,\cdot) - \mathbf{w}^\infty(t,\cdot)\|_{L^2(D)} \leq \frac{C}{\sqrt{\sigma}},\;\;\;\mbox{ for almost all } 0<t<T,
\end{equation}
and
\begin{equation} \label{equivnablaw}
\|\nabla_H\mathbf{w}^{\sigma} - \nabla_H\mathbf{w}^\infty\|_{L^2(0,T;L^2(D))} \leq \frac{C}{\sqrt{\sigma}},
\end{equation}
again with constants $C$ that do not depend on $\sigma\geq 1$.

Next, the proof of Proposition \ref{helicalYvariables}
shows that the helical solution $\mbfu^\sigma$ of \eqref{3DNSper} with initial condition $\mbfu^\sigma_0$ related to $\mw^\sigma_0$ via \eqref{wdefweak} is given by 
\[
\mathbf{u}^{\sigma}(t,x',x_3) = M^{\sigma}(x_3)\,\mathbf{w}^{\sigma}(t,m^{\sigma}(x_3) x'),
\]
for $t>0$, where $x'=(x_1,x_2)$, so that in particular:
\[
 \mathbf{u}^{\sigma}(t,x_H)=
 \mathbf{w}^{\sigma}(t, x'), \qquad
  \nabla_H \mathbf{u}^{\sigma}(t,x_H)=
 \nabla_{x'}\mathbf{w}^{\sigma}(t, x').
\]
From \eqref{uinftydef}, it also immediately follows that 
\[
  \mbfu^\infty(t,x_H)= \mw^\infty(t,x'), \qquad
\nabla _x\mathbf{u}^{\infty}(t,x_H)=
   \nabla_H  \mathbf{u}^{\infty}(t,x_H)=
 \nabla_{x'}\mathbf{w}^{\sigma}(t, x').
\]
Then, estimate \eqref{eq.MainThm1.1} is a straightforward consequence of \eqref{equivw} and \eqref{equivnablaw}.
\end{proof}

\begin{rem}
It is natural to derive bounds of traces at $x_3=0$ in view of
\eqref{eq.sigmalim}. In fact, recalling that $\mbfu^\sigma$ is smooth
in $x\in \Omega^\sigma$ for $t>0$, a simple argument, using  a Taylor's
expansion for $\mbfu^\sigma$ in $0\leq x^3/\sigma<1$, centered at $0$ 
with $x'\in D$ fixed, shows that for a given fixed $t$,
\[
   |\mbfu^\sigma(t,x)-\mbfu^\infty(t,x)|=|\mw^\sigma(t,x')-\mw^\infty(t,x')| + O\left(\frac{|x_3|}{\sigma}\right),
\]
with bounds that depend on $|\mw^\sigma(x')|$ and $|\nabla_y \mw^\sigma(x')|$.  Therefore, an argument similar to that of the proof of Theorem \ref{MainThm1} above gives:
\begin{equation}
  \begin{aligned}
    &\|\mathbf{u}^{\sigma}(t) - \mathbf{u}^\infty(t)\|_{L^2(U)}\underset{\sigma\to \infty}{\to 0}, \qquad \text{ for all } \; 0<t<T, \\
&\|\nabla \mathbf{u}^{\sigma} - \nabla \mathbf{u}^\infty\|_{L^2(0,T;L^2(U))} \underset{\sigma\to \infty}{\to 0},
   \end{aligned}
\end{equation}
for any cylinder $U\subset \Omega$ of the form
\[
     U = \{x=(x',x_3) \;|\; x'\in D, \; x_3 \in [0,\delta],  \;
     \delta/\sigma \underset{\sigma\to \infty}{\longrightarrow} 0  \}.
\]
On the other hand, $|x_3|/\sigma$ is $O(1)$ in $\Omega^\sigma$. Hence, it seems difficult to obtain any convergence estimate of $\mbfu^\sigma$ to $\mbfu^\infty$ globally in $\Omega^\sigma$ as $\sigma \to \infty$.
\end{rem}

The previous result is not exactly what we aimed at, as it represents a way of approximating a general two-dimensional flow by a well-chosen helical flow. What we want, instead, is to show that helical flows with large $\sigma$ are nearly two-dimensional. This adjustment is expressed in our next result.

\begin{theorem} \label{MainThm2}
Fix $\sigma\geq 1$ and  $T>0$. Let $\mathbf{u}_0^{\sigma} \in H^1_{0,per}(\Omega^{\sigma})$ be a divergence-free, helical vector field.
Let $\mathbf{u}^{\sigma}$ be the unique, strong helical  solution of
\eqref{3DNSper} on $[0,T)$ with initial velocity $\mathbf{u}_0^{\sigma}$. 
There exists a (not necessarily unique) $\widetilde{\mw_0^{\infty}} 
\in H^1(D)$,   such that, if
$\widetilde{\mbfu^\infty}$ is the unique 
$2\frac{1}{2}$D solution of the Navier-Stokes equations \eqref{3DNSper} with
initial data $\widetilde{\mbfu^\infty_0}(\cdot,x_3) = \widetilde{\mw^\infty}_0$, then  for
all $0<t<T$,
\begin{equation} \label{eq.MainThm2}
 \begin{gathered}
 \|\mathbf{u}^{\sigma}(t,\cdot,x_3=0) - \widetilde{\mathbf{u}^{\infty}}(t,\cdot,x_3=0)\|_{L^2(D)}
 + \qquad \qquad \qquad  
\\ \qquad \qquad \qquad \|\nabla_H \mathbf{u}^{\sigma}\vert_{x_3=0} - \nabla_H \widetilde{\mathbf{u}^{\infty}}\vert_{x_3=0}
 \|_{L^2(0,T;L^2(D))}  
 \leq C(T)
\frac{1}{\sqrt{\sigma}},
 \end{gathered}
\end{equation}
where $C$ is independent of $\sigma \in [1,\infty)$.
\end{theorem}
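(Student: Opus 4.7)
The plan is to reverse the direction of Theorem \ref{MainThm1}: instead of starting from a genuinely planar datum and extending it to a helical one by a small corrector, I will start from the given helical $\mathbf{u}^\sigma_0$ and \emph{add} a corrector of order $1/\sigma$ to land on a legitimate $2\frac{1}{2}$D initial datum, which is then close enough to propagate via Proposition \ref{EnergyEstDiff}. First I would invoke Proposition \ref{HelicalCartCorweak} to extract the unique $\mathbf{w}^\sigma_0\in H^1_0(D)^3$ associated with $\mathbf{u}^\sigma_0$; the helical divergence-free condition on $\mathbf{u}^\sigma_0$ translates into the twisted constraint \eqref{TwistedDivFreeCond}, namely $\mathrm{div}_y (\mathbf{w}^\sigma_0)_H = -\frac{2\pi}{\sigma} E w^{\sigma,3}_0$.

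Next I would produce the corrector. Since $w^{\sigma,3}_0$ vanishes on $\partial D$ and $\mathrm{div}_y\,y^\perp=0$, integration by parts gives $\int_D E w^{\sigma,3}_0\,dy = \int_D \mathrm{div}_y(y^\perp w^{\sigma,3}_0)\,dy = 0$. By Lemma \ref{GaldisLemma} there exists $\mathbf{v}^\sigma_0\in H^1_0(D)^2$ with
\[
\mathrm{div}_y\mathbf{v}^\sigma_0 = \tfrac{2\pi}{\sigma}Ew^{\sigma,3}_0, \qquad \|\mathbf{v}^\sigma_0\|_{H^1(D)}\leq \frac{C}{\sigma}\|w^{\sigma,3}_0\|_{H^1(D)}.
\]
I then set $\widetilde{\mathbf{w}^\infty_0}:= \mathbf{w}^\sigma_0 + (\mathbf{v}^\sigma_0,0)\in H^1_0(D)^3$. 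By construction, the first two components of $\widetilde{\mathbf{w}^\infty_0}$ are divergence-free in the planar sense \eqref{straightdivfree} (the two divergence contributions cancel), the third component agrees with $w^{\sigma,3}_0$, and $\|\widetilde{\mathbf{w}^\infty_0}-\mathbf{w}^\sigma_0\|_{L^2(D)}\leq C/\sigma$. The non-uniqueness claim in the statement is immediate from the non-uniqueness in Lemma \ref{GaldisLemma}, as one may add any planar solenoidal $H^1_0$ vector field to $\mathbf{v}^\sigma_0$ without disturbing the previous identities.

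Let $\widetilde{\mathbf{w}^\infty}$ denote the unique strong solution of \eqref{2D3compNS} with initial value $\widetilde{\mathbf{w}^\infty_0}$, and let $\widetilde{\mathbf{u}^\infty}$ be its lift via \eqref{uinftydef}. Then I apply Proposition \ref{EnergyEstDiff} with $\mathbf{w}^\infty$ replaced by $\widetilde{\mathbf{w}^\infty}$ to obtain
\[
\|\mathbf{w}^\sigma(t)-\widetilde{\mathbf{w}^\infty}(t)\|_{L^2(D)}^2 + \int_0^t\|\nabla(\mathbf{w}^\sigma-\widetilde{\mathbf{w}^\infty})\|_{L^2(D)}^2\,ds \leq C(T)\left(\|\widetilde{\mathbf{w}^\infty_0}-\mathbf{w}^\sigma_0\|_{L^2}^2 + \frac{1}{\sigma}\right) \leq \frac{C(T)}{\sigma},
\]
with $C(T)$ depending on the $L^2(D)$ norms of $\mathbf{w}^\sigma_0$ and $\widetilde{\mathbf{w}^\infty_0}$, both uniformly in $\sigma\geq 1$. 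Finally, since $M^\sigma(0)=I$ and $m^\sigma(0)=I$, the trace identities $\mathbf{u}^\sigma(t,x',0)=\mathbf{w}^\sigma(t,x')$, $\widetilde{\mathbf{u}^\infty}(t,x',0)=\widetilde{\mathbf{w}^\infty}(t,x')$, and their horizontal-gradient analogues, convert the $L^2_t L^2_y$ estimate above directly into \eqref{eq.MainThm2}.

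The main obstacle, modest as it is, lies in setting up the corrector $\mathbf{v}^\sigma_0$: the existence is handed to us by the Bogovskii-type Lemma \ref{GaldisLemma}, but one has to observe that the right-hand side $\tfrac{2\pi}{\sigma}Ew^{\sigma,3}_0$ already carries an explicit $1/\sigma$, which is precisely what makes $\|\widetilde{\mathbf{w}^\infty_0}-\mathbf{w}^\sigma_0\|_{L^2}^2$ enter the Grönwall-type bound of Proposition \ref{EnergyEstDiff} at the order $1/\sigma$ that matches the intrinsic $1/\sigma$ term already present in \eqref{ThetaEnergyEst}. Everything else is a repackaging of earlier material; no new energy estimate for the symmetry-reduced system is required.
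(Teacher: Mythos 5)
Your proposal is correct and follows essentially the same route as the paper's own proof: extract $\mathbf{w}^\sigma_0$ via Proposition \ref{HelicalCartCorweak}, build the Bogovskii corrector from Lemma \ref{GaldisLemma} using the zero-mean identity $\int_D E w^{\sigma,3}_0\,dy=0$, define $\widetilde{\mathbf{w}^\infty_0}$ by removing the non-solenoidal part (your sign convention for $\mathbf{v}^\sigma_0$ is the opposite of the paper's, but the cancellation is the same), and conclude via Proposition \ref{EnergyEstDiff} and the trace identities at $x_3=0$. No gaps.
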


We use the notation $\widetilde{\mbfu^\infty}$ to emphasize that,
while this is a solution of 
the limit problem, it is still dependent on $\sigma$ due to the
correction to the initial 
condition to enforce the divergence-free condition.

\begin{proof}
As in the proof of Theorem \ref{MainThm1}, the traces of $\mbfu^\sigma$ and $\mbfu^\infty$ are well defined at the level of strong solutions.
Furthermore, as in that theorem we will introduce a correction to the
initial data $\mbfu^\sigma_0$ to enforce the divergence-free condition
on the initial data $\widetilde{\mbfu^\infty}_0$ we take for the limit problem. 
Let $\mw^\sigma_0\in H^1_0(D)$ be  associated to
the helical field $\mbfu^\sigma_0\in H^1_{0,per}(\Omega^\sigma)$ by
\eqref{wdefweak}, satisfying \eqref{TwistedDivFreeCond}. Let
$\mw^\sigma$ be the regular solution of \eqref{3DNStwistedcart} with
this initial data.

Next, let $\mathbf{v}_0^{\sigma}=(v_0^{\sigma,1},v_0^{\sigma,2}) \in H^{1}_0(D)$ be a solution, given by Lemma \ref{GaldisLemma},
to the problem
\begin{equation} \label{correctionw}
\mbox{div}_y \mathbf{v}_0^{\sigma} = -\frac{2\pi}{\sigma}Ew_0^{\sigma,3},
\end{equation}
where again $E$ is the differential operator defined in \eqref{E1},
such that
\begin{equation} \label{correctionwestimate}
\|\mathbf{v}_0^{\sigma}\|_{H^1} \leq C\frac{1}{\sigma}\|E \,w_0^{\sigma,3}\|_{L^2(D)}\leq \frac{C}{\sigma}\|\mathbf{w}_0^{\sigma}\|_{H^1(D)}.
\end{equation}
Its existence is justified exactly as before.

We then set
\begin{equation} \label{tildeu0sigma}
\widetilde{\mathbf{u}_0^{\infty}}(x) = \widetilde{\mathbf{w}^\infty}_0(x'):= \mathbf{w}_0^{\sigma} (x')- (\mathbf{v}_0^{\sigma}(x'),0),
\end{equation}
which is divergence free by \eqref{correctionw}.
Let $\widetilde{\mathbf{w}^{\infty}}$ be the solution of
\eqref{2D3compNS} with  initial data $\widetilde{\mw^\infty}_0$. The
$2\frac{1}{2}$D solution of the Navier-Stokes equations is given by 
$\widetilde{\mathbf{u}^{\infty}}(t,x)= 
\widetilde{\mathbf{w}^{\infty}}(t,x')$. In particular, the trace
$\widetilde{\mathbf{u}^{\infty}}(t,\cdot,x_3=0)=\widetilde{w^\infty}(t,\cdot)$.

By Proposition \ref{EnergyEstDiff}, estimate \eqref{eq.MainThm2} now follows from
\begin{eqnarray}
\label{thetatilde} \|\mathbf{w}_0^{\sigma} - \widetilde{\mathbf{w}_0^{\infty}} \|_{L^2(D)} = \|\mathbf{v}_0^{\sigma}\|_{L^2(D)} \leq  \|\mathbf{v}_0^{\sigma}\|_{H^1(D)}
\leq \frac{C}{\sigma}\|\mathbf{w}_0^{\sigma}\|_{H^1(D)}, \\ \nonumber \\
\label{utildest} \|\widetilde{\mathbf{w}_0^{\infty}}\|_{L^2(D)} \leq \|\mathbf{w}_0^{\sigma} \|_{L^2(D)} + \|\mathbf{v}_0^{\sigma}\|_{L^2(D)} \leq \left(1+\frac{C}{\sigma}\right)\|\mathbf{w}_0^{\sigma}\|_{H^1(D)}, \\ \nonumber \\
\label{w0H1est} \|\mathbf{w}_0^{\sigma}\|_{L^2(D)} \leq \|\mathbf{w}_0^{\sigma}\|_{H^1(D)} = \|\mathbf{u}_0^{\sigma}(x_3=0)\|_{H^1(D)},
\end{eqnarray}
with constants uniform in $\sigma\in [1,\infty)$.
\end{proof}

\section{The inviscid case} \label{Euler}

In this section we discuss symmetry reduction and the limit
$\sigma\to\infty$ for the Euler equations under an additional geometric
assumption, considered already in \cite{D,ET}. This assumption
can be viewed as the analog of the no-swirl condition in axisymmetric
flows and for this reason we will call it the {\em no helical swirl} or
{\em no helical stretching} condition.
It can be shown that the flow induced by solutions of the Euler
equations preserves this condition at least when the solution is
regular enough. Furthermore, vorticity  has an especially
simple form, being determined by its vertical component, which is
advected by the flow. This observation allows to prove global
existence and uniqueness of weak, helical solutions in much the same spirit as for
solutions to the two-dimensional Euler equations, provided the initial
velocity is bounded (cf. \cite{Yudovich63}.)

We now briefly review these results, referring the reader to
\cite{D,ET} for more details. We will then discuss the limit problem
as $\sigma\to \infty$ and converge of solutions. On one hand
the limit problem is simpler, being given by the 2D Euler
equations. In fact, under the no-stretching constraint the
symmetry-reduced helical Euler equations becomes a two-dimensional
systems for two components of the velocity, which admits a
vorticity-stream function formulation (see
e.g. \cite{MajdaBertozzi2002}.)  This system is the analog of the
symmetry-reduced equations \eqref{3DNStwistedcart} for the
Navier-Stokes.
On the other hand, to circumvent the lack of smoothing in the equations for positive
time we will use compactness arguments to pass to the limit
in $\sigma$, which do not provide a rate of convergence.

For ease of notation, we temporarily suppress the explicit
$\sigma$-dependence of solutions and write $\mbfu$ for $\mbfu^\sigma$ for
example. 
 We  assume for now that $\mbfu$ and $p$ are smooth, so
that all the manipulations to follow are justified.

Given that smooth, helical vector fields and functions
are $\sigma$ periodic by Proposition \ref{HelicalCartCorr}, we state  the
initial-boundary-value problem for the Euler equations in the
fundamental domain $\Omega^\sigma$:
\begin{equation} \label{3DEEper}
\left\{
\begin{array}{ll}
\partial_t \mathbf{u} + (\mathbf{u}\cdot\nabla)\mathbf{u} = -\nabla p, & \mbox{ in }(0,+\infty) \times \Omega^\sigma; \\
\dive \,  \mathbf{u} = 0, & \mbox{ in } [0,+\infty) \times
\Omega^\sigma;\\
\mathbf{u}(t,x',x_3) \cdot x'= 0, & \mbox{ for  } t \in [0,+\infty), 
\quad |x'|=1, \; 0\leq x_3\leq \sigma;\\
\mathbf{u}(t,x',0) =\mbfu(t,x',\sigma) & \mbox{ for  } t \in [0,+\infty), 
\quad x'\in D;\\
p(t,x',0) =p(t,x',\sigma) & \mbox{ for  } t \in [0,+\infty), 
\quad x'\in D;\\
\mathbf{u}(0,x) = \mathbf{u}_0, & \, x\in\Omega^\sigma,
\end{array}
\right.
\end{equation}
where again $x=(x',x_3)$ and $x'=(x_1,x_2)$.

Let 
\begin{equation} \label{xivectordef}
   \mxi : = \left( x_2, -x_1, \frac{\sigma}{2\pi} \right)  =
   - \mathbf{x}_H^\perp +  \frac{\sigma}{2\pi} \mathbf{e}_3.
\end{equation}
We will consider flows satisfying the following no-helical-swirl or
stretching condition:
\begin{equation} \label{nostretch}
           \mbfu \cdot \mxi =0.
\end{equation}
This condition is preserved by smooth flows under the time evolution
governed by the Euler equations. 

There are several  consequences of this condition.  Firstly, the vertical
component $u_3$ of the velocity field $\mbfu$ is computed from the
other two components, i.e., the dynamics is planar. Secondly,
 the vorticity $\bomega= \curl_x \mbfu$ is given by
\begin{equation} \label{Eulervortdef}
    \bomega(t, x) = \frac{2\pi}{\sigma}  \omega(t,x) \, \mxi,\qquad \omega :=\omega^3,
\end{equation}
where $\omega^3$ is the component of the vorticity along the axis of
the cylinder $\Omega$. Furthermore, $\omega$ is advected by the
flow $\mbfu$:
\begin{equation} \label{Eulervorteq}
   \pa_t \omega + \mbfu\cdot \nabla \omega = 0.
\end{equation}

To derive the symmetry-reduced equations, we recall that $\mw(t,y) =
\mbfu(t,y,0)$ from Proposition \ref{HelicalCartCorr}, given that the matrices
$M$ and $m$ becomes the identity matrix for $x_3=0$. Consequently,
\begin{equation} \label{omegaident}
   \varpi(t,y) := \omega(t,y,0) = -\nabla_y^{\perp} \mw_H(t,y) = \curl_y \mw_H(t,y).
\end{equation}
Above, to avoid introducing further notation, we have abused notation slightly and
identified $(w^1,w^2)$ with $\mw_H= (w^1,w^2,0)$, where $w^1$, $w^2$
are the horizontal  components of $\mw$ with
respect to the standard Cartesian frame in $\RR^3$.

While $\mw_H$ is not divergence free in view of
\eqref{3DNStwistedcart2}, one observes that a divergence-free 2D flow
can be constructed from $\mw$ under the no-helical-swirl condition,
which 
therefore admits a stream
function $\psi$ on $D$. This stream function satisfies:
\begin{equation} \label{psidef}
  \begin{cases}
         \pa_{y_1} \psi  = \frac{4\pi^2}{\sigma^2} \left [ -y_1 y_2 \,
           w^1
         + \left ( \frac{\sigma^2}{4\pi^2}  + y_1^2 \right) 
       w^2  \right], \\
       \pa_{y_2} \psi  = - \frac{4\pi^2}{\sigma^2} \left [ \left (
           \frac{\sigma^2}{4\pi^2}  + y_2^2 \right)  w^1 -y_1 y_2
         \, w^2
          \right ].
   \end{cases}
\end{equation}
We define the following matrix:
\begin{equation} \label{Kinversedef}
  H(y) :=     \frac{4\pi^2}{\sigma^2}   \left [ \begin{matrix}    
        \left ( \frac{\sigma^2}{4\pi^2}  + y_2^2 \right)   &  -y_1 y_2
        \\
  -y_1 y_2   &   \left ( \frac{\sigma^2}{4\pi^2}  + y_1^2
          \right)  \\
       \end{matrix} \right ],
\end{equation}
and  rewrite \label{psidef}  as  
\[
           \nabla_y^\perp \psi = H(y)\, \mw_H.
\]
A direct calculation, as in \cite{ET}, then shows that 
\begin{equation} \label{Kdef}
   \begin{aligned}
     \curl\, \mw_H  &= \dive K(y)\, \nabla_y \psi,   \quad \text{ with}\\
  K(y) &:=     \frac{1}{\frac{\sigma^2}{4\pi^2} + |y|^2}   \left [ \begin{matrix}    
         \left ( \frac{\sigma^2}{4\pi^2}  + y_1^2
          \right)   &   y_1 y_2       \\
            y_1 y_2   & \left ( \frac{\sigma^2}{4\pi^2}  + y_2^2 \right) 
       \end{matrix} \right ].
\end{aligned}
\end{equation}
From \eqref{omegaident} and \eqref{Kdef}, it follows that 
\[
           \varpi = \cL_H\, \psi, 
\]
where the operator $\cL_H$ is defined by:
\begin{equation}
      \cL_H := \dive_y( K(y)\, \nabla_y).
\end{equation}
It is not difficult to show that $\cL_H$ is a second-order, scalar,
strongly elliptic operator. Consequently, $\nabla^2 \cL_H$ is a
singular integral.
 
Next, calculus inequalities  show that the transport equation
\eqref{Eulervorteq} for $\omega$ reduces by helical symmetry (i.e.,
using the correspondence in Proposition \ref{HelicalCartCorr}) to the
following equation for $\varpi$ on $(0,T)\times D$:
\begin{equation*} 
    \pa_t \varpi + \mw_H \cdot \nabla_y \varpi +
    \frac{4\pi^2}{\sigma^2}  (y^\perp\cdot \mw_H) E \varpi = 0,
\end{equation*}
where $E$ is again the operator given in \eqref{E1}. Using
\eqref{Kdef}, we can rewrite this equation as an equation for $\varpi$
and $\psi$ only (cf. \cite[Lemma 2.17]{ET}.)
Furthermore, we can choose Dirichlet boundary conditions for $\psi$
from the no-penetration condition for $\mbfu$ as in Corollary 2.16 of \cite{ET}.
Therefore, under the no-helical swirl condition and for sufficiently
regular solutions, the initial-boundary-value \eqref{3DEEper} for the
Euler equations is equivalent to the following symmetry-reduced system:
\begin{subequations} \label{IBVPstreamfunction}
  \begin{align}
    &\pa_t \varpi + \pa_{y_1} \psi \,\pa_{y_2} \varpi - \pa_{y_2} \psi
    \,\pa_{y_1} \varpi =0,   &y\in D, 0<t<T, \label{streamvort1}\\
    & \varpi = \cL_H \psi,   &y\in D, 0<t<T,  \nonumber\\
    & \psi(0,y) = \psi_0(y),   &y\in D, \\
    & \psi\vert_{\pa D} = 0,  &y \in D.   \label{streamvortbc}
   \end{align}
\end{subequations}
Since \eqref{streamvort1} is a transport equation by the
divergence-free vector field $\nabla^\perp_y \psi$, the $L^\infty$
norm of the reduced vorticity $\varpi$ is preserved under the flow.
By \eqref{Eulervortdef} and Proposition \ref{HelicalCartCorr}, the vorticity $\bomega= \curl_x \mbfu$
is preserved under the flow induced by $\mbfu$.  By the Beale-Kato-Maja criterion ( see
e.g. \cite{MajdaBertozzi2002}) then,  smooth helical solutions
of \eqref{3DEEper} are global in time and agree with weak solutions with the same initial data.

We next discuss weak solution. 
Given $\psi_0\in H^1_0(D)\cap H^2(D)$, we call a function $\psi\in
L^1([0,T);H^1_0(D)\cap H^2(D))$ a weak solution of
the above system on $[0,T)$ with initial data $\psi_0$ if, for all test function $\phi\in
C^\infty_c([0,T)\times D)$, $\psi$ satisfies:
\begin{equation} \label{psieqweak}
  \begin{aligned}
   \int_{D} \cL_H \psi_0 \,\phi(0)\, dy  - \int_0^T \int_D \cL_H \psi
   \,\pa_t \phi \, dy\, dt  + \int_0^T \int_D \pa_{y_2} \psi  \cL_H
   \psi \, \pa_{y_1} \phi \, dy\, dt \\  
    - \int_0^T \int_D \pa_{y_1} \cL_H \psi \, \pa_{y_2} \phi \, dy\,
    dt =   0. 
 \end{aligned}
\end{equation}
Ettinger and Titi  \cite{ET} proved that there exists a unique weak solution on
$[0,T)$, for all $T>0$,
provided in addition $\cL_H \psi_0\in L^\infty(D)$. In this case the solution
satisfies $\cL_H \psi \in L^\infty((0,T)\times D)$.

While there is an existence theory for weak solutions of the Euler
equation in three dimensions \cite{DeLS,Wied11}, we will give here a definition
of weak solution to  \eqref{3DEEper} adapted to the geometry of the
problem and amenable to the analysis of the limit $\sigma\to\infty$
(for further discussion on the uniqueness of helical weak solutions,
we refer the reader to \cite{BLNLT12}.)
Let $\psi$ be the unique weak solution of \eqref{IBVPstreamfunction}
with initial condition $\psi_0\in H^1_0(D)\cap H^2(D)$ such that
$\cL_H \psi_0\in L^\infty(D)$. Let  $\mw  =(\mw_H,w^3)$, where $\mw_H$ is 
given in \eqref{Kdef}  and $w^3$ is obtained from $\mw_H$ via
\eqref{nostretch} as 
\[
     w^3= \frac{2\pi}{\sigma} y^\perp\cdot \mw_H.
\]
Let $\mbfu$ be defined from $\mw$ by
\eqref{wdefweak}. We will call $\mbfu$  a weak, helical
solution of \eqref{3DEEper}. 
This definition is justified in view of  the following proposition.

\begin{proposition} \label{helicalYvariablesEuler}
Let $\{\psi_{0,n}\}$ be a sequence of functions converging to  
$\psi_0\in H^1_0(D)\cap H^2(D)$. Let $\psi_n$ be the smooth solution  
of \eqref{streamvort1} with initial data $\psi_{0,n}$. Then, $\psi_n$
converges uniformly on $[0,T)\times D)$ to $\psi$ the unique
weak solution of \eqref{IBVPstreamfunction}.
\end{proposition}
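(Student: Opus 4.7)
The plan is to derive uniform a priori bounds on the smooth solutions $\psi_n$, extract a convergent subsequence by compactness, identify its limit as $\psi$ via the Ettinger--Titi uniqueness theorem, and then upgrade to convergence of the full sequence.

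First, I would establish uniform estimates. Setting $\varpi_n := \cL_H \psi_n$, equation \eqref{streamvort1} says that $\varpi_n$ is transported by the divergence-free planar field $\nabla_y^\perp \psi_n$; hence $\|\varpi_n(t,\cdot)\|_{L^p(D)} = \|\cL_H \psi_{0,n}\|_{L^p(D)}$ for every $p \in [1,\infty]$ and every $t \in [0,T)$. I would choose the approximating sequence so that in addition $\|\cL_H \psi_{0,n}\|_{L^\infty(D)} \leq C \|\cL_H \psi_0\|_{L^\infty(D)}$, which is finite by the hypothesis underlying Ettinger--Titi. By elliptic regularity for the strongly elliptic operator $\cL_H$ with Dirichlet data, this yields $\|\psi_n\|_{L^\infty((0,T); W^{2,p}(D))} \leq C_p$ for every $p < \infty$, and hence by the 2D Sobolev embedding,
\[
\|\nabla_y \psi_n\|_{L^\infty((0,T); C^\alpha(\bar D))} \leq C_\alpha, \quad \alpha \in (0,1),
\]
uniformly in $n$. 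Second, from the transport equation, $\pa_t \varpi_n = -\nabla_y^\perp \psi_n \cdot \nabla_y \varpi_n$ is uniformly bounded in $L^\infty((0,T); W^{-1,p}(D))$ for every $p<\infty$, since it is the distributional divergence of an $L^\infty_t L^p_y$ field. Combined with the uniform spatial bound $\varpi_n \in L^\infty((0,T); L^p(D))$, the Aubin--Lions lemma produces a subsequence (not relabeled) converging in $C([0,T]; W^{-1,p}(D))$ and, by interpolation with the $L^\infty$ bound, in $C([0,T]; L^p(D))$. Applying $\cL_H^{-1}$ and using elliptic regularity promotes this to strong convergence of $\psi_n$ in $C([0,T]; W^{2,p}(D))$, and thereby to uniform convergence on $[0,T] \times \bar D$ to a limit $\bar\psi$.

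Third, I would pass to the limit in the weak formulation \eqref{psieqweak}. The linear terms in $\cL_H \psi_n$ converge by the strong $L^p$ convergence just established; the nonlinear terms $\pa_{y_j}\psi_n\, \cL_H\psi_n\, \pa_{y_i} \phi$ converge by combining the uniform convergence of $\nabla_y \psi_n$ with the strong $L^p$ convergence of $\cL_H \psi_n$. The resulting $\bar\psi$ satisfies \eqref{psieqweak} with initial datum $\psi_0$, so by the Ettinger--Titi uniqueness theorem $\bar\psi = \psi$. Since every subsequence of $\{\psi_n\}$ admits a further subsequence converging to the same limit $\psi$, the full sequence converges uniformly to $\psi$ on $[0,T) \times D$.

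The main obstacle is arranging the approximating sequence so that $\|\cL_H \psi_{0,n}\|_{L^\infty}$ stays uniformly bounded: the hypothesis only provides $H^1_0 \cap H^2$ convergence, which in two dimensions is strictly weaker than $W^{2,\infty}$ and does not by itself control $\cL_H \psi_{0,n}$ in $L^\infty$. In practice this is handled by noting that the Ettinger--Titi weak solution is itself built as a limit of classical flows with uniformly $L^\infty$-controlled vorticity, so the natural approximating family $\{\psi_{0,n}\}$ inherits this bound; alternatively, one truncates a mollification of $\varpi_0$ and solves $\cL_H \psi_{0,n} = \varpi_{0,n}$ under homogeneous Dirichlet data. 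Once this is arranged, the transport structure of \eqref{streamvort1} propagates the $L^\infty$ bound in time for free, and the remainder of the argument proceeds as above.
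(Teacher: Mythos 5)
Your overall strategy---uniform a priori bounds, compactness, passage to the limit in the weak formulation \eqref{psieqweak}, identification of the limit by the Ettinger--Titi uniqueness theorem, and the subsequence trick to recover convergence of the full sequence---is the same as the paper's, and your discussion of the need to control $\|\cL_H\psi_{0,n}\|_{L^\infty}$ is, if anything, more careful than the paper, which simply asserts that $\cL_H\psi_n$ is uniformly bounded in $L^\infty([0,T)\times D)$. However, there is one step that fails as written: the claim that convergence in $C([0,T];W^{-1,p}(D))$ together with a uniform bound in $L^\infty((0,T);L^p(D))$ (or even $L^\infty((0,T)\times D)$) upgrades, ``by interpolation,'' to convergence in $C([0,T];L^p(D))$. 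There is no such interpolation: $L^p$ does not sit between $W^{-1,p}$ and $L^\infty$ on any scale with a positive smoothness gap. A sequence such as $f_k(y)=\sin(k y_1)$ is bounded in $L^\infty$, converges to $0$ in $W^{-1,p}$, and does not converge in $L^p$; nothing in your bounds rules out this kind of oscillation in $\varpi_n$. Consequently the strong $C([0,T];L^p)$ convergence of $\varpi_n$, and with it the strong $C([0,T];W^{2,p})$ convergence of $\psi_n$, is not established.

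The gap is repairable in two ways. The paper's route is to exploit the transport structure directly: the uniform $L^\infty$ vorticity bound gives, via the Calder\'on--Zygmund estimate for $\cL_H$, a uniform Log-Lipschitz bound on $\nabla_y^\perp\psi_n$, hence equicontinuity of the particle-trajectory maps $X_n$; writing $\varpi_n(t)=\varpi_{0,n}\circ X_n^{-1}(t)$ then yields genuine strong $L^1$ convergence of the vorticity along a subsequence (this is the device of \cite[Section 8.2.2]{MajdaBertozzi2002}). Alternatively, you can observe that strong convergence of the vorticity is not actually needed to pass to the limit in \eqref{psieqweak}: your legitimate conclusions---weak-$\ast$ convergence of $\varpi_n$ in $L^\infty((0,T)\times D)$ and convergence of $\varpi_n$ in $C([0,T];W^{-1,p})$, which by elliptic regularity gives $\psi_n\to\bar\psi$ in $C([0,T];W^{1,p})$ and, interpolated against the uniform $W^{2,p}$ bound (now a valid interpolation, between $W^{1,p}$ and $W^{2,p}$), in $C([0,T];C^1(\bar D))$ for large $p$---already suffice, since the nonlinear term is a product of a strongly convergent factor $\nabla_y\psi_n$ and a weakly-$\ast$ convergent factor $\cL_H\psi_n$. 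Either repair closes the argument; as it stands, the interpolation step is the one point at which your proof does not go through.
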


The proof is contained in \cite{ET}. We recall it briefly.

\begin{proof}
The sequence $\{\psi_n\}$ is uniformly
bounded in $L^1([0,T);H^1_0(D)\cap H^2(D))$ 
and $\cL_H \psi$ is uniformly bounded in
$L^\infty([0,T)\times D)$. 
 Recall  that the equation for $\varpi_n=\cL_H
\psi_n$ is a transport equation by $\nabla^\perp_y \psi_n$, which is
divergence free.
Since $\pa_i\pa_j \cL_H$ is a
Calderon-Zygmund singular integral, $\{\nabla^\perp_y \psi_n\}$ is
bounded in the space LLip of Log-Lipschitz vector fields. Hence, the
family $\{X_n\}$, where $X_n$ is the  flow generated by $\nabla^\perp
\psi_n$ is equicontinuous and hence, upon possibly passing to
subsequences, 
$\varpi_n$ converges strongly in
$L^1((0,T)\times D)$ and $\nabla^\perp_y \psi_n$ converges uniformly
to $\nabla^\perp_y \psi_n$. In particular,  $\psi_n$ converges uniformly to
$\psi$.  These convergence results are enough to pass to the
limit in the weak formulation \eqref{psieqweak}
(cf. \cite[Section 8.2.2]{MajdaBertozzi2002}.) The limit
$\lim_{n\to\infty} \psi_n$ must necessarily agree
 with $\psi$ by uniqueness of the
solution, so the whole sequence converges to $\psi$. 
\end{proof}

This result also implies that, if $\varpi(0)\in L^\infty(D)$, then 
$\varpi(t,x)=\varpi(X^{-1}(t,x)$, where $X$ is the flow generated by 
$\nabla^\perp \psi$, is the (unique) weak solution of
\eqref{IBVPstreamfunction}, hence all its $L^p$ norms are constant in time.

We next discuss the limit $\sigma\to
\infty$. We reinstate the explicit dependence on $\sigma$, and write for
example $\mbfu^\sigma$ for the solution of \eqref{3DEEper},
$\bomega^\sigma$ for $\curl_x  \mbfu^\sigma$ and so on.
We denote the corresponding quantities in the limit by $\mbfu^\infty$,
$\bomega^\infty$ and so on.

Formally taking the limit $\sigma\to \infty$ in
\eqref{nostretch} gives $u^{\infty,3}\equiv 0$ and, hence,
$\mbfu^\infty = \mbfu^\infty_H$. Furthermore, $\mbfu^\infty$ becomes independent of the
$x_3$ variable, so that 
\[
\mbfu^\infty(x) = \mw^\infty(x')=\mw^\infty_H(x')
\]
is divergence-free as a vector field on $D$.
Also, the matrix $K^\sigma$
approaches the identity matrix in the limit, so that $\cL^\infty_H$
is simply 
 the Laplace operator,  $\psi^\infty$ is the stream functions associated to
$\mbfu^\infty_H$, and $\varpi^\infty(x') = \omega^\infty(x) = \curl_{x'} \mbfu^\infty_H(x)$.
In particular,  \eqref{streamvort1}
becomes the vorticity-stream function formulation of the 2D Euler
equations. We conclude that, at least formally, helical solutions to the 3D Euler
equations become planar 2D solutions of the Euler equations as
$\sigma\to\infty$. 

We explicitly state the limit problem:
\begin{subequations} \label{IBVPlimit}
  \begin{align}
    &\pa_t \varpi^\infty + \pa_{y_1} \psi^\infty \,\pa_{y_2} \varpi^\infty - \pa_{y_2} \psi^\infty
    \,\pa_{y_1} \varpi^\infty =0,   &y\in D, 0<t<T, \label{streamvort12D}\\
    & \varpi^\infty = \Delta_y \psi^\infty ,   &y\in D, 0<t<T,  \nonumber\\
    & \psi^\infty(0,y) = \psi^\infty_0(y),   &y\in D, \\
    & \psi^\infty\vert_{\pa D} = 0,  &y \in D.   \label{streamvortbc2D}
   \end{align}
\end{subequations}

Below we will study convergence of the
corresponding stream functions $\psi^\sigma \to  \psi^\infty$ as
$\sigma \to \infty$. 
Since the uniqueness and regularity of weak solutions depends on an
$L^\infty$ control on the vorticity, we will prescribe
the initial vorticity $\varpi^\sigma_0$ independent of $\sigma$, i.e., 
\[
   \varpi^\infty_0 = \varpi^\sigma_0=\varpi_0 \in L^\infty(D).
\]
This choice n can be relaxed by taking a sequence  $\varpi^\sigma_0$
converging to $\varpi_0$ strongly in $L^\infty(D)$.   
We then obtain an initial
condition for the stream function, $\psi_0^\sigma$, that is
$\sigma$-dependent.   
We choose the initial data for the stream function as the unique
solution in $H^1_0(D)$ of the following problems, respectively:
\begin{equation} \label{psi0def}
 \begin{aligned}
       \Delta \psi^\infty_0 =  \varpi_0, \\
        \cL^\sigma_H \psi^\sigma_0 =  \varpi_0.
 \end{aligned}
\end{equation} 
By elliptic regularity, $\psi_0^\infty$, $\psi^\sigma_0 \in W^{2,p}$
for all $1<p<\infty$. 

Next we will derive uniform bounds in $\sigma$ 
on the $W^{2,p}$ norm of  $\psi^\sigma$ and then use compactness
 arguments to pass to
the limit.
It is well known that, under the condition that the initial vorticity
$\varpi_0$ is bounded, solutions to the 2D Euler equations are global
in time and unique \cite{Yudovich63}. 
Therefore, it will be  enough to establish convergence along subsequences.

\begin{lemma} \label{vortSobolevbound}
Let $1<p<\infty$ be fixed. Then, there exists a constant $C_p>0$ such
that, for all $\sigma>1$ and for all $f \in W^{2,p}(D)$, 
\begin{equation} \label{vortSobolevbound1}
    \|\cL_H f\|_{L^p(D)}\leq C_p \|f\|_{W^{2,p}(D)}.
\end{equation}
Moreover, there exists a $\sigma_0>1$ and  
a constant $C_p>0$,  independent of $\sigma \in [\sigma_0,\infty)$  such
that 
\begin{equation} \label{vortSobolevbound2}
      \|f\|_{W^{2,p}(D)} \leq C_p\, \|\cL_H f\|_{L^p(D)}.
\end{equation}
\end{lemma}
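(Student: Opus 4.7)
The plan is to treat $\cL_H$ as a small perturbation of the Laplacian for large $\sigma$, exploiting the fact that the coefficient matrix $K$ from \eqref{Kdef} converges to the identity at rate $O(1/\sigma^2)$ uniformly on $D$. Writing $\alpha = \sigma^2/(4\pi^2)$, a direct computation produces the decomposition $K = I + B^\sigma$, where
\begin{equation*}
   B^\sigma(y) = \frac{1}{\alpha+|y|^2}\left[\begin{matrix} -y_2^2 & y_1 y_2 \\ y_1 y_2 & -y_1^2 \end{matrix}\right].
\end{equation*}
Since $|y|\leq 1$ on $D$, differentiating this explicit formula immediately yields the key estimate
\begin{equation*}
     \|B^\sigma\|_{L^\infty(D)} + \|\nabla B^\sigma\|_{L^\infty(D)} \leq \frac{C}{\sigma^2}, \qquad \sigma\geq 1,
\end{equation*}
and in particular $\cL_H f = \Delta f + R^\sigma f$ with $R^\sigma f := \dive_y(B^\sigma \nabla_y f)$.

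Expanding $\cL_H f = (\delta_{ij} + B^\sigma_{ij})\pa_i\pa_j f + (\pa_i B^\sigma_{ij})\pa_j f$ and applying the triangle inequality then gives \eqref{vortSobolevbound1} at once, with a constant depending only on $p$ (converging to the constant for $\Delta$ as $\sigma\to\infty$). For \eqref{vortSobolevbound2}, I would appeal to the standard Calder\'on--Zygmund $L^p$-estimate for the Dirichlet Laplacian on $D$, namely $\|f\|_{W^{2,p}(D)} \leq C_p \|\Delta f\|_{L^p(D)}$ for $f\in W^{2,p}(D)\cap W^{1,p}_0(D)$. Rewriting $\Delta f = \cL_H f - R^\sigma f$ and using the bound on $R^\sigma$ from the previous step,
\begin{equation*}
  \|f\|_{W^{2,p}(D)} \leq C_p \|\cL_H f\|_{L^p(D)} + \frac{C'_p}{\sigma^2}\,\|f\|_{W^{2,p}(D)},
\end{equation*}
and choosing $\sigma_0$ with $C'_p/\sigma_0^2 \leq 1/2$ lets the second term be absorbed on the left for every $\sigma\geq\sigma_0$, yielding \eqref{vortSobolevbound2}.

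The main obstacle is conceptual rather than technical: as stated, \eqref{vortSobolevbound2} cannot hold on all of $W^{2,p}(D)$, because any harmonic polynomial lies in the kernel of $\Delta$ while $\cL_H$ applied to it is $O(1/\sigma^2)$ in $L^p$, which would force $C_p\to\infty$. Hence the implicit setting must be the subspace with zero Dirichlet trace (consistent with how the lemma is used for $\psi^\sigma$ via \eqref{psi0def}), and it is precisely on this subspace that the Calder\'on--Zygmund theory for $\Delta$ supplies the $\sigma$-independent constant that drives the perturbation argument. Everything else is routine bookkeeping once the quantitative decay of $B^\sigma$ has been noted.
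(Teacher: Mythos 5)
Your proof is correct and follows essentially the same route as the paper: both decompose $K^\sigma = I_2 + (\mbox{small matrix})$ with the perturbation and its gradient of size $O(1/\sigma^2)$ uniformly on $D$, deduce \eqref{vortSobolevbound1} by the triangle inequality, and obtain \eqref{vortSobolevbound2} from the Calder\'on--Zygmund estimate for $\Delta$ together with absorption of the $O(\sigma^{-2})\|f\|_{W^{2,p}}$ remainder for $\sigma\geq\sigma_0$. Your observation that \eqref{vortSobolevbound2} can only hold on the zero-Dirichlet-trace subspace (constants and harmonic polynomials would force $C_p\to\infty$) is correct, and is precisely the hypothesis the paper leaves implicit when it invokes ``elliptic regularity for the Poisson problem''; this is consistent with the lemma's use, since $\psi^\sigma$ and $\psi^\sigma_0$ vanish on $\pa D$.
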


\begin{proof}
We observe that we can write the matrix $K^\sigma = I_2 + F^\sigma$,
where $I_2$ is the $2\times 2$-identity matrix and  
\[
     F^\sigma (y) =   \frac{1}{ 1+ \frac{4\pi^2 \, |y|^2}{\sigma^2}}  
  \left[   \begin{matrix}   \frac{4\pi^2\, y_1^2}{\sigma^2}   &
     \frac{4\pi^2 \, y_1 y_2 }{\sigma^2}  \\
    \frac{4\pi^2 \, y_1 y_2 }{\sigma^2}  &    \frac{4\pi^2 \, y_2^2}{\sigma^2} 
    \end{matrix} 
    \right].
\]
We have:
\begin{equation} \label{Fbounds}
   \|F^\sigma\|_{L^\infty(D)} \leq  C_1\, \frac{1}{\sigma^2}, \qquad
    \|\nabla_y F^\sigma\|_{L^\infty(D)} \leq C_2 \frac{1}{\sigma^2} ,
\end{equation}
for some constants $C_1$, $C_2$ independent of $\sigma$.
The bound \eqref{vortSobolevbound1} then follows immediately.

To establish  \eqref{vortSobolevbound2}, we  write 
\[ 
    \Delta_y f = \cL_H f -  F^\sigma(y):\nabla^2 f - 
(\dive_y F^\sigma(y)) \cdot \nabla_y f,
\]
so that from elliptic regularity for the Poisson's problem 
for $1<p<\infty$, H\"older's inequality and \eqref{Fbounds}:
\[
   \begin{aligned}
          \|f\|_{W^{2,p}} &\leq C_p' \, \|\cL_H f\|_{L^p} + 
        \| F^\sigma :\nabla^2 f \|_{L^p} + 
     \|\dive_y F^\sigma \cdot \nabla_y f\|_{L^p} \\
    &\leq C_p'\, \|\cL_H f\|_{L^p} + C_1\,  \frac{1}{\sigma^2} 
        \|\nabla^2_y f\|_{L^p} + C_2\,  \frac{1}{\sigma^2} \|\nabla_y f\|_{L^p},
  \end{aligned}
\]
or equivalently:
\[ 
  (1-(C_1+C_2)/\sigma^2) \| f\|_{W^{2,p}} \leq C_p' \, \|\cL_H f\|_{L^p}.
\]
So, the result follows provided we choose 
\ $\sigma_0 > 1/\sqrt{(C_1+C_2)}$.
\end{proof}

We now state and prove our convergence result for the Euler equations.
We recall that the only difference between the equations at $\sigma$
finite and in the limit is the  equation expressing the relationship
between the vorticity and the stream function.

\begin{theorem} \label{Eulersigmalimit}
Let $\varpi_0\in L^\infty(D)$. Let $\psi^\sigma_0$  and
$\psi^\infty_0$ be given by \eqref{psi0def}.
  Let $\psi^\sigma$ be the unique
weak solution of \eqref{IBVPstreamfunction} with initial data 
$\psi_0^\sigma$. Let $\psi^\infty$ be the
unique weak solution of \eqref{IBVPlimit} with initial data $\psi^\infty_0$.
Then, $\psi^\sigma$ converges to $\psi$ weakly in $L^p([0,T);W^{1,p}(D)).$
\end{theorem}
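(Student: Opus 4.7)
The plan is a compactness argument: derive $\sigma$-uniform bounds for $\psi^\sigma$ in $L^\infty(0,T;W^{2,p}(D))$ and for $\partial_t\psi^\sigma$ in $L^\infty(0,T;W^{1,p}(D))$, use Aubin--Lions to extract a strongly convergent subsequence in $C([0,T];W^{1,p}(D))$, identify the limit as a weak solution of \eqref{IBVPlimit} by sending $\cL_H^\sigma \to \Delta$, and invoke Yudovich's uniqueness theorem for the $2$D Euler equations to promote subsequence convergence to convergence of the full family.

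First I would obtain the spatial bound. Since $\varpi^\sigma = \cL_H^\sigma\psi^\sigma$ satisfies \eqref{streamvort1} and is therefore transported by the divergence-free field $\nabla_y^\perp\psi^\sigma$, all its $L^p$ norms are conserved in time, so
\[
\|\varpi^\sigma(t)\|_{L^p(D)} = \|\varpi_0\|_{L^p(D)} \leq |D|^{1/p}\|\varpi_0\|_{L^\infty(D)}
\]
for every $1<p<\infty$ and every $0\leq t<T$. Lemma \ref{vortSobolevbound}, applied for $\sigma\geq \sigma_0$, then yields a uniform estimate $\|\psi^\sigma\|_{L^\infty(0,T;W^{2,p}(D))} \leq C_p\|\varpi_0\|_{L^\infty(D)}$, and Sobolev embedding (for $p>2$) gives uniform control of $\nabla_y\psi^\sigma$ in $L^\infty([0,T)\times D)$.

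For the time-derivative bound I would rewrite \eqref{streamvort1} in conservation form, $\partial_t\varpi^\sigma + \dive_y(\varpi^\sigma\nabla_y^\perp\psi^\sigma) = 0$; the flux is uniformly bounded in $L^\infty(0,T;L^p(D))$ by the previous step, so $\cL_H^\sigma\partial_t\psi^\sigma$ is uniformly bounded in $L^\infty(0,T;W^{-1,p}(D))$. A duality version of \eqref{vortSobolevbound2}, obtained from the decomposition $\cL_H^\sigma = \Delta + R^\sigma$ with $R^\sigma$ of order $O(1/\sigma^2)$ in $C^1$ (as in the proof of Lemma \ref{vortSobolevbound}), then gives $\partial_t\psi^\sigma$ uniformly bounded in $L^\infty(0,T;W^{1,p}(D))$. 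Aubin--Lions applied to $W^{2,p}\hookrightarrow\hookrightarrow W^{1,p}$ produces a subsequence $\psi^{\sigma_k}$ converging strongly in $C([0,T];W^{1,p}(D))$ and weakly-$\ast$ in $L^\infty(0,T;W^{2,p}(D))$ to some $\psi^\ast$. Using the decomposition $\cL_H^\sigma\psi^\sigma = \Delta\psi^\sigma + R^\sigma\psi^\sigma$ with $\|R^\sigma\psi^\sigma\|_{L^p} = O(1/\sigma^2)\to 0$, I get $\cL_H^\sigma\psi^\sigma \rightharpoonup \Delta\psi^\ast$ weakly in $L^p((0,T)\times D)$; the same reasoning applied to \eqref{psi0def} yields $\psi_0^\sigma \to \psi_0^\infty$ in $W^{1,p}(D)$. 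The nonlinear terms $\partial_{y_i}\psi^\sigma\cdot\cL_H^\sigma\psi^\sigma$ in the weak formulation \eqref{psieqweak} then pass to the limit as strong-weak products, so $\psi^\ast$ solves \eqref{IBVPlimit} with initial data $\psi_0^\infty$ and bounded vorticity. Yudovich's uniqueness theorem forces $\psi^\ast = \psi^\infty$, hence the whole family converges, and the strong $C_t W^{1,p}_y$ convergence implies a fortiori the weak $L^p([0,T);W^{1,p}(D))$ convergence asserted.

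The main anticipated obstacle is the uniform-in-$\sigma$ elliptic regularity for $\cL_H^\sigma$ with right-hand side in $W^{-1,p}$, i.e., the $W^{-1,p}\to W^{1,p}$ analog of \eqref{vortSobolevbound2} needed for the time-derivative bound. This should follow by the same perturbation scheme (standard $W^{-1,p}\to W^{1,p}$ regularity for the Dirichlet Poisson problem, plus absorption of $R^\sigma$ terms into the left-hand side when $\sigma$ is large), but one must verify that the homogeneous boundary condition $\partial_t\psi^\sigma|_{\partial D}=0$ is genuinely available and that the constants remain independent of $\sigma$. With that bound in hand, all remaining steps are standard compactness-and-uniqueness arguments.
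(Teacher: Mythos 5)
Your argument is correct and follows essentially the same route as the paper: a uniform $L^\infty$ (or $L^p$) bound on the transported vorticity, the $\sigma$-uniform elliptic estimate of Lemma \ref{vortSobolevbound} to control $\psi^\sigma$ in $L^\infty(0,T;W^{2,p})$, a time-derivative bound in $W^{1,p}$, Aubin--Lions compactness, identification of the limit operator via the decomposition $K^\sigma=I_2+F^\sigma$ with $\|F^\sigma\|_{C^1}=O(1/\sigma^2)$, and Yudovich uniqueness to upgrade subsequential to full convergence. The single point of divergence is the bound on $\partial_t\psi^\sigma$: the paper simply invokes the a priori estimate of \cite[Lemma 4.2]{ET}, whereas you rederive it from the conservation form of the vorticity equation together with a $W^{-1,p}\to W^{1,p}$ version of \eqref{vortSobolevbound2}; your perturbative absorption scheme for that dual estimate is sound (the error term $\mathrm{div}_y(F^\sigma\nabla_y g)$ has $W^{1,p}\to W^{-1,p}$ norm $O(1/\sigma^2)$, and $\partial_t\psi^\sigma\vert_{\partial D}=0$ follows from differentiating the time-independent Dirichlet condition \eqref{streamvortbc}), so this self-contained detour costs a little extra work but removes the external citation.
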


\begin{proof}
Since the initial vorticity $\varpi_0\in L^\infty(D)$, $\nabla^\perp
\psi^\sigma \in$ LLip$(D)$ with a bound on the Log-Lipschitz norm 
that is uniform in $\sigma$ for $\sigma \in [1,\infty)$  by
\eqref{Fbounds}. Therefore, we have  a  uniform bound  on
$\varpi^\sigma$ in  $L^\infty([0,T)\times D)$, thanks to the transport
equation \eqref{streamvort1}. 
In turn  by \eqref{vortSobolevbound2}, 
 this bound implies a bound on the family $\{\psi^\sigma\}$
of weak solutions of \eqref{IBVPstreamfunction} 
in  all spaces
$L^\infty([0,T);W^{2,p})$, $1<p<\infty$ that is 
 uniform  in $\sigma\geq \sigma_0$ for $\sigma_0$ large
enough. 

Next, we recall the following a priori bound for weak solutions of
\eqref{IBVPstreamfunction}  (se \cite[Lemma 4.2]{ET}):
\[
     \|\pa_t \psi^\sigma\|_{L^\infty([0,T);W^{1,p}(D))} \leq C_p
     \|\cL_H^\sigma \psi^\sigma\|_{L^\infty((0,T)\times D)}\, \|\psi^\sigma\|_{L^\infty([0,T);W^{1,p})},
\]
where $C_p$ is independent of $\sigma$ for $\sigma$ large enough as in
Lemma \ref{vortSobolevbound}. 
Therefore,  $\{\psi^\sigma\}$ is uniformly bounded in Lip$([0,T);W^{1,p}(D))$.
By the Aubin compactness theorem (see e.g. \cite[Lemma 8.4]{CF}) then,
there is a sequence $\{\psi^{\sigma_n}\}$ that converges strongly in
$L^\infty([0,T);W^{1,p})$ to a function $\psi^\infty$. Upon passing to
a subsequence if necessary, one can assume also that 
$\varpi^{\sigma_n}$ converges weakly-$\ast$  in  $L^\infty([0,T)\times
D)$ to a function $\varpi^\infty$ from the uniform bound obtained
above. 
It remains
to show that $\varpi^\infty=\Delta \psi^\infty$ in $L^2(D)$. This result follows
from the identity $\varpi^\sigma=\cL_H \psi^\sigma$, valid for all $\sigma$, and
\eqref{Fbounds}, by writing again $K^\sigma=I_2 +F^\sigma$.
 
As in the proof of Proposition 5.8 in \cite{ET}, these convergence results are
sufficient to show that $\psi^\infty$ and $\varpi^\infty$ satisfy the
weak formulation of the limit problem \eqref{IBVPlimit}.
But weak solutions of the 2D Euler equations are unique if the
vorticity is bounded, hence any converging sequence of
$\{\psi^\sigma\}$ must converge to $\psi^\infty$.
\end{proof}

\section*{Acknowledgments}  
\mbox{M.~L.~F.} and \mbox{H.~N.~L.} wish to thank the University of California at
Riverside, where part of this work was conducted, for their
hospitality. \mbox{M.~L.~F.} is partially supported by Brazil CNPq grant 
303089/2010-5, and 
CNPq fellowship  200434/2011-0.
H.~N.~L.~is partially supported by Brazil CNPq grant  306331/2010-1,
CAPES fellowship   6649/10-6, and FAPERJ grant  E-26/103.197/2012.
A.~M.  would like to thank the
Institute of Mathematics at the Federal University in Rio de Janeiro
for their hospitality and support. A. M.'s~work  is partially supported by  the
US National Science Foundation grants DMS-1009713 and DMS-1009714.
D.~N.'s work is partially supported by the Chinese National Youth grant 11001184.
The work of E.~S.~T.~was supported in part by   the
Minerva Stiftung/Foundation, and the National Science Foundation grants  DMS-1009950,
DMS-1109640 and DMS-1109645.

\bibliographystyle{amsplain}
\bibliography{helical}

\end{document}